\documentclass[11pt]{amsart}
\usepackage[english]{babel}
\usepackage[utf8]{inputenc}

\usepackage{geometry}

\usepackage[dvipsnames]{xcolor}

\usepackage{graphicx}
\usepackage{caption}
\usepackage{subcaption}

\usepackage{amsfonts}
\usepackage{bm}

\usepackage[ruled, vlined, linesnumbered]{algorithm2e}
\SetKwInput{KwInput}{Input}                
\SetKwInput{KwOutput}{Output}              

\usepackage[normalem]{ulem}

\usepackage{pgf,tikz,pgfplots}
\pgfplotsset{compat=1.15}

\usepackage{forest}

\usepackage{chngpage}

\usepackage{mathrsfs}
\usetikzlibrary{arrows}

\graphicspath{{img/}}

\usepackage{mathtools}
\usepackage{hyperref}

\DeclarePairedDelimiter\abs{\lvert}{\rvert}
\DeclarePairedDelimiter\norm{\lVert}{\rVert}

\usepackage{amsthm}
\newtheorem{theorem}{Theorem}[section]

\newtheorem{proposition}[theorem]{Proposition}

\theoremstyle{definition}
\newtheorem{definition}{Definition}[section]
\newtheorem{remark}{Remark}

\title[Euler Characteristic Curves and Profiles]{Euler Characteristic Curves and Profiles: a stable shape invariant for big data problems}
\author{Pawe{\l} D{\l}otko}
\thanks{PD and DG acknowledges support by Dioscuri program initiated by the Max Planck Society, jointly managed with the National Science Centre (Poland), and mutually funded by the Polish Ministry of Science and Higher Education and the German Federal Ministry of Education and Research}
\address{Dioscuri Centre in Topological Data Analysis\\
Mathematical Institute, Polish Academy of Sciences\\
Warsaw, Poland}
\email{pdlotko@impan.pl}

\author{Davide Gurnari}
\address{Dioscuri Centre in Topological Data Analysis\\
Mathematical Institute, Polish Academy of Sciences\\
Warsaw, Poland}
\email{dgurnari@impan.pl}

\begin{document}

\begin{abstract}
    Tools of Topological Data Analysis provide stable summaries encapsulating the shape of the considered data. Persistent homology, the most standard and well studied data summary, suffers a number of limitations; its computations are hard to distribute, it is hard to generalize to multifiltrations and is computationally prohibitive for big data-sets. 
    In this paper we study the concept of Euler Characteristics Curves, for one parameter filtrations and Euler Characteristic Profiles, for multi-parameter filtrations. While being a weaker invariant in one dimension, we show that Euler Characteristic based approaches do not possess some handicaps of persistent homology; we show efficient algorithms to compute them in a distributed way, their generalization to multifiltrations and practical applicability for big data problems. In addition we show that the Euler Curves and Profiles enjoys certain type of stability which makes them robust tool in data analysis. Lastly, to show their practical applicability, multiple use-cases are considered. 
\end{abstract}

\maketitle

\section{Introduction}

Topological Data Analysis since its beginning \cite{edelsbrunner_topological_2002}, \cite{singh_topological_2007} has brought attention in the data science community. Topological tools, like persistent homology \cite{edelsbrunner_computational_2022} and mapper \cite{singh_topological_2007} were used in multiple tasks in material science \cite{lee_quantifying_2017}, \cite{dlotko_topological_2016}, \cite{hiraoka_hierarchical_2016}, medicine \cite{nicolau_topology_2011} and many more. In time, persistent homology has been successfully integrated with machine learning pipelines, and mapper became an exploratory data analysis tool. In this work we will extend on the path of persistent homology. With its successes, attempts were made to apply it in task of big data analysis. 
However, the progress is minimal. While there exist a single distributed implementation \cite{bauer_distributed_2013}, it does not scale up and was not extensively used in big data analysis. In practice, mostly various sequential implementations are used \cite{gudhi:urm}. To bypass the problem of too large input, a number of sparsification techniques \cite{silva_topological_2004}, \cite{sheehy_linear-size_2013} as well as bootstrap \cite{chazal_bootstrap_2013} and zig-zag \cite{carlsson_zigzag_2010} approaches were proposed. While they scale up to problems of a certain size, they tend to bypass the big data challenge rather than proposing a solution for it.   

In this paper we extend the tool of classical Euler characteristic and Euler characteristic curves. The new contributions include: 
\begin{itemize}
    \item A proof of stability of the Euler Characteristic Curve (ECC)  with respect to the 1-Wasserstein distance between persistence diagrams;
    \item A generalization of the Euler Characteristic Curve to the multiparamenter filtration case, with arbitrary number of parameters, that we denote as Euler Characteristic Profile (ECP);
    \item An analysis of the stability of such ECPs;
    \item Distributed algorithms to compute the exact ECC for Vietoris-Rips and cubical complexes that can be naturally extended to the multiparameter case. An Python implementation of such algorithms is provided as scikit-learn \cite{scikit-learn} compatible package.
    \item Discussion of methods to compare and vectorize ECCs and ECPs;
    \item Examples of applications of the ECC/ECP to real word data.
\end{itemize}

\label{sec:related_work}
While we are not aware of any distributed algorithm to compute Euler Characteristic Curves of a Vietoris-Rips complex, Heiss and Wagner \cite{heiss_streaming_2017} describe a streaming algorithm to compute the ECC from cubical complexes which has also been adapted for GPU computations \cite{wang_gpu_2022}. While their implementation is very fast we see no straightforward way to generalize it to the multiparameter filtration case.
To the best of our knowledge the concept of Euler Characteristic Profiles of arbitrary dimension is novel in the literature. There are however some works that focus on the bifiltration case, known as Euler Characteristic Surfaces. It was used in an applied setting by Roy et al. \cite{roy_understanding_2020} to analyze drying droplets but no topological background is provided. Beltramo et al. \cite{beltramo_euler_2021} gave a description of Euler Characteristic Surfaces in the persistence homology framework and apply it to obtain a descriptor of both pointcloud and image based data. Moreover, they provide a Python implementation of their algorithms which however requires the input bifiltration to be binned. Chen et al. \cite{chen2022tampsgcnets} introduced a time-aware multipersistence Euler-Poincaré surface to describe dynamical networks and proved its weak $L_1$ stability. A recent preprint by Perez \cite{perez_euler_2022} analyzes the stability of Euler and Betti curves of stochastic processes  on compact Riemannian manifolds.

\section{Euler characteristic curves (and profiles)}

In this section we introduce the essential mathematical concepts needed to define Euler Characteristic Curves and Profiles. For an exhaustive presentation we refer to classic textbooks like \cite{hatcher_algebraic_2002} and \cite{edelsbrunner_computational_2022}. 

\begin{definition}
A \emph{CW} or \emph{cell complex} $X$ is a topological space that can be built up starting from a discrete set $X^0$ of  $0$-dimensional cells and then inductively creating the $n$-skeleton $X^n$ by attaching $n$-cells to $X^{n-1}$ along their boundary. The process can be stopped at some finite dimension or can continue indefinitely. A subset $A \subseteq X$ is a \emph{subcomplex} of $X$ if with each cell of $A$, all its lower dimensional cells enters $A$.
\end{definition}

\begin{remark}
    Since we are interested in applying this machinery to analyze real word data we will always assume that our complexes are finite.
\end{remark}
 While the theory can be built in the general CW complex setting, the algorithms we present in Section \ref{sec:Algo} are specific to two different specializations that are used to represent different types of data: simplicial and cubical complexes.

\begin{definition}
\label{def:simplicialcomplex}
 An \emph{abstract simplicial complex} is a finite collection of sets $K$ such that $\sigma \in K$ and $\tau \subseteq \sigma$ implies $\tau \in K$. The sets in $K$ are called \emph{simplicies} and the \emph{dimension} of a simplex is $dim (\sigma) = card(\sigma)-1$. We will often refer to 0-simplices as \emph{vertices}, and to 1-simplices as \emph{edges}. Given a simplex $s = \{v_0,\ldots,v_k\}$, its \emph{boundary} is $\partial s = \sum_{i=0}^{k} (-1)^i \{v_0,\ldots,\hat{v_i},\ldots,v_k\}$, where $\hat{v_i}$ denotes that the vertex $v_i$ is removed from the simplex. Simplices $\{v_0,\ldots,\hat{v_i},\ldots,v_k\}_{i=0}^k$ are in the boundary of $s$.
\end{definition}

There are different ways of obtaining an abstract simplicial complex from point cloud data such as the Čech, the Vietoris-Rips and the Alpha constructions \cite{edelsbrunner_computational_2022}, in Section \ref{sec:VR} we describe the Vietoris-Rips construction.

\begin{definition}
    An \emph{elementary interval} is a subset of $\mathbb{R}$ of the type $I = [l, l+1]$ or $I=[l, l]$, for some integer $l$. The first type is called non-degenerate interval while the second is a degenerate interval. An \emph{elementary cube} $C$ is a product of elementary intervals $C= I_1 \times \cdots \times I_n$ and its dimension is the number of non-degenerate intervals in the product. 
    The \emph{boundary} of an elementary interval is $\partial[l, l+1] = [l+1, l+1] + [l, l]$ and $\partial[l, l] = 0$. The boundary of an elementary cube is then defined as $\partial C = \partial(I_1 \times \cdots \times I_n) = \sum_{i=1}^n I_1 \times \cdots \times \partial I_i \times\cdots \times I_n$.
Similarly to the simplicial complex case, a \emph{cubical complex} $K$ is a collection of elementary cubes closed under operation of taking boundary
\end{definition}

One of most common use case of cubical complexes is image data. In Section \ref{sec:cube} we describe how to build a filtered cubical complex from an $n$-dimensional image, by identifying the image's pixels with top dimensional cells.


In what follows we will refer to simplices and cubes, as elements of a simplicial or a cubical complex jointly as \emph{cells} in a \emph{cell complex}. A cell $\tau$ is said to be a \emph{face} of $\sigma$ if $\tau$ is in the boundary of $\sigma$.

\begin{definition}
\label{def:chain}
Let $K$ be a cell complex and $d$ a dimension. A \emph{d-chain} is a formal sum of $d$-cells in $K$, namely $c = \sum a_i \sigma_i$ where the $\sigma_i$ are the $d$-cells and the $a_i$ are the coefficients.  
\end{definition}

There are many possible choices for the group of coefficients. A standard approach in computational topology is to use \emph{modulo 2 coefficients}, i.e. the $a_i$ can be either $0$ or $1$ and satisfy $1+1=0$ \footnote{Using modulo 2 coefficients allows us to get rid of the $(-1)^i$ in the definition of the boundary in \ref{def:simplicialcomplex}}. Other options include integer, rational or real coefficients. 

Two $d$-chains can be added component-wise. Namely, given $c = \sum a_i \sigma_i$ and $c' = \sum b_i \sigma_i$, $c+c'= \sum (a_i+b_i) \sigma_i$. Therefore, we can define the \emph{group of d-chains} $\bm{C}_d = \bm{C}_d(K)$. The boundary of a $d$-chain is the sum of the boundaries of its cells $\partial c = \sum a_i \partial \sigma_i$ , which is a $(d-1)$-chain. Since the boundary commutes with the addition operation, we can define -for each dimension $d$- the \emph{boundary homomorphism} $\partial_d : \bm{C}_d \rightarrow \bm{C}_{d-1}$.

A \emph{$d$-cycle} is a $d$-chain with empty boundary $\partial c = 0$. A \emph{$d$-boundary} is a $d$-chain which is the boundary of a $(d+1)$-chain. Since $\partial$ commutes with addition, we have the \emph{group of $d$-cycles} $\bm{Z_d} = \bm{Z}_d(K)$ , and the \emph{group of $d$-boundaries} $\bm{B_d} = \bm{B}_d(K)$. It is a fundamental result that $\partial_d \partial_{d+1} c = 0$ for every dimension $d$ and every $(d+1)$-chain $c$. This means that the boundary of a boundary is always zero, in other words $\bm{B}_d$ is a subgroup of $\bm{Z}_d$. This leads to the following definition.

\begin{definition}
\label{def:homology}

    The \emph{d-th homolgy group} is the $d$-th cycle group modulo the $d$-th boundary group, $\bm{H}_d = \bm{Z}_d / \bm{B}_d$. The \emph{d-th Betti number} is the rank of this group, $\beta_d = \text{rank}(\bm{H}_d)$.

\end{definition}

\begin{definition}
\label{def:filtration}
    Let $K$ be a cell complex. A \emph{filtration} of $K$ is a sequence of nested subcomplexes $\emptyset = K_0 \subseteq K_1 \subseteq \cdots \subseteq K_n = K$. Such sequence is finite for finite complexes. It can be obtained by means of a \emph{filtration function} over $K$, a monotonic non-decreasing function $f: K \rightarrow \mathbb{R}$ such that $f(\tau) \leq f(\sigma)$ if $\tau$ is a face of $\sigma$.
    Note that every \emph{sublevel set} $K_t = f^{-1}\left(-\infty, t\right]$ is a subcomplex of $K$ for every $t \in \mathbb{R}$. 
\end{definition}


 For each dimension $d$, such a filtration corresponds to a sequence of homology groups $0 = \bm{H}_d(K_0) \rightarrow \bm{H}_d(K_1) \rightarrow \cdots \rightarrow \bm{H}_d(K_n) = \bm{H}_d(K)$. For every $i < j$, the homomorphism $f_d^{i,j} : \bm{H}_d(K_i) \rightarrow \bm{H}_d(K_j)$ is induced from the inclusion map of $K_i$ into $K_j$.

\begin{definition}
    The \emph{$d$-th persistent homology groups} are the images of the homomorphisms $\bm{H}_d^{i,j} = \text{im}f_d^{i,j}$. The ranks of these groups are the \emph{$d$-th persistent Betti numbers} $\beta_d^{i,j} = \text{rank}(\bm{H}_d^{i,j})$. 
\end{definition}

Intuitively, the $d$-th persistent Betti number $\beta_d^{i,j}$ counts how may homology classes of $K_i$ are still present in $K_j$. There are two scenarios in which a homology class from $K_i$ may not be present in $K_j$ - it may either became trivial, or it may became identical (homologous) to a class that was created earlier.

\begin{definition}
    The $k$-th dimensional \emph{persistence diagram} of a filtered complex $K$, $Dgm_k(K)$ is a multiset of points in the extended real plane $(\mathbb{R} \cup \{ \infty \}) \times (\mathbb{R} \cup \{ \infty \})$. The multiplicity of each point $(b, d)$ indicates the number of independent $k$-dimensional classes that are born at filtration value $b$ and die at filtration value $d$.
\end{definition}

All the points on the diagonal are always included, with countable multiplicity, in a persistence diagram, in order to make sense of the following.
\begin{definition}
    A \emph{matching} of two persistence diagrams $C$ and $D$ is a bijection $\eta: C \rightarrow D$ possibly to or from points on the diagonal.
\end{definition}

\begin{definition}
    The 1-Wasserstein distance between two $k-$dimensional persistence diagrams $C, D$ is 
    \begin{equation*}
        W_1(C, D) = \inf_{\eta:C \rightarrow D} \sum_{(b,d) \in C} \norm{(b,d) - \eta(b, d)}_\infty^1
    \end{equation*}
    where $\eta$ is a matching of $C$ and $D$.
\end{definition}

\begin{definition}
The \emph{Euler Characteristic} of a cell complex $K$ is the alternating sum of the number of its cells in each dimension
\[
\chi(K) = \sum_d (-1)^d |K^d| \quad .
\]
\end{definition} 
Where $K^d$ denotes the $d$ dimensional cells in $K$. Thanks to the Euler-Poincaré formula, the Euler characteristic can also be expressed as the alternating sum of the Betti numbers, the ranks of the cell complex's homology groups: $\chi(K) = \sum_d (-1)^d \beta_d(K)$ \cite{edelsbrunner_computational_2022}.

\begin{definition}
Let us consider a filtered complex $K$ with filtration function $f: K \rightarrow \mathbb{R}$. We can define its \emph{Euler Characteristic Curve} as a function that assign an Euler number $\chi$ for each filtration level $t \in \mathbb{R}$
\[
ECC(K, t) = \chi(K_t) \quad .
\]
Recall that $K_t = f^{-1}\left(-\infty, t\right]$ is a subcomplex of $K$ for every $t \in \mathbb{R}$.
\end{definition}
\label{def:ECC}



We are now interested in extending the concept of Euler Characteristic Curve to the more general Multidimensional Persistence setting~\cite{carlsson_theory_2007}. In order to do so, we need to generalize Definition~\ref{def:filtration} to families of nested complexes indexed by posets. While Multidimensional Persistence is a vibrant and active research topic, in this paper we will only make use of the basic concepts. We refer the interested reader to \cite{botnan_introduction_2022} for a modern introduction to the topic.

\begin{definition}
\label{def:multifiltration}

Let $K$ be a cell complex and $P$ a poset. A \emph{P-indexed filtration} on $K$ is a family of nested complexes such that $K_x$ is a subcomplex of $K$ for each $x \in P$, and $K_x \subseteq K_y$ whenever $x \leq y$. 
If $P = T_1 \times \cdots \times T_n$ where each $T_i$ is a totally ordered set, we call  a \emph{multiparameter} or \emph{n-parameter filtration}. 

\end{definition}

It is a natural question to ask whether the idea of sublevel sets of a filtration function could be extended too. In general, this is not the case. It can be achieved only when each cell of $K$ first appears in the filtration at some unique minimal index in $P$.
\begin{definition}
    Let $K$ be a cell complex, $P$ a poset and $f$ a function $f: K \rightarrow P$. The $sublevel filtration$ of $f$ is a family of complexes of the type
    \[
    K_p = \{\sigma \in K \, | \, f(\sigma) \leq p \} \, .
    \]
    A filtration isomorphic to a sublevel filtration is said to be \emph{1-critical}. A filtration that is not 1-critical is said to be \emph{multicritical}.
\end{definition}

\begin{definition}
\label{def:ECP}
The \emph{Euler Characteristic Profile} (ECP) of a $P$-filtered complex $K$ is a function that assign to any value $p \in P$ the Euler characteristic of the corresponding subcomplex $K_p$.
\[
ECP(K, p) = \chi(K_p) \quad .
\]
\end{definition}

For the rest of the paper we will focus on the case $P=\mathbb{R}^n$.

\begin{remark}
    The two dimensional ECP already appeared in the literature and it is known as Euler Characteristic Surface \cite{roy_understanding_2020}, \cite{beltramo_euler_2021}, \cite{chen2022tampsgcnets}. It was however defined only for the Cartesian product of two one-parameter filtrations and it is treated as matrix in the following way. Given a bi-filtering function $F: K \rightarrow  \mathbb{R}^2 $ over $K$ and a set of threshold values $I = \{(a_i, b_j) \, | \, 1 \leq i \leq m , 1 \leq j \leq n\}$, the Euler Characteristic Surface is the $m\times n$ integer valued matrix $S$ whose entries are $S_{ij} = \chi(K_{ij}) = \chi(F^{-1} \left( (-\infty, a_i] \times (-\infty, b_j]\right)$. This matrix representation corresponds to sampling the two dimensional profile on the grid given by $I$. In general the choice of such grid is not unique, and the spacing of such grid may not be constant. This makes it difficult to define a general notion of distance between Euler Characteristic Surfaces matrices. For this reason we think it is more natural to define the Euler Characteristic Profile as a function like in \ref{def:ECP} and look for stability results in this setting.
\label{rmk:ECP_VS_ECS}
\end{remark}

%
%
%
%
%
%
%
%
%
%
%
%
%
%
%
%
\section{Stability of Euler Characteristic Curves and Profiles}

The goal of this section is to find a bound for the distance between Euler Characteristic Curves by some know topological quantity of the point cloud that is robust with respect to small perturbations of the point cloud. This way, the stability of Euler Characteristic Curves is obtained.

\subsection{Euler Characteristic Curves}
\label{sec:ECC_DIFF}

Since ECCs are are piece-wise constant functions, we consider the $L_1$ distances between them.

\begin{definition}
    Let $K_1$ and $K_2$ be two filtered cell complexes. 
    The $L_1$ distance between their Euler Characteristic Curves is 
    \[
      ||ECC(K_1, t) - ECC(K_2, t)||_1 = \int_\mathbb{R} |ECC(K_1, t) - ECC(K_2, t)| dt  \quad .
    \]
\label{def:ECC_W1}
\end{definition}

\begin{figure}
    \centering
    \scalebox{0.5}{\definecolor{cqcqcq}{rgb}{0.7529411764705882,0.7529411764705882,0.7529411764705882}
\definecolor{ffqqqq}{rgb}{1,0,0}
\definecolor{qqwuqq}{rgb}{0,0.39215686274509803,0}
\begin{tikzpicture}[line cap=round,line join=round,>=triangle 45,x=1cm,y=1cm]
\fill[line width=0pt,color=cqcqcq,fill=cqcqcq,fill opacity=0.3] (0,5) -- (0,4) -- (2,4) -- (2,5) -- cycle;
\fill[line width=0pt,color=cqcqcq,fill=cqcqcq,fill opacity=0.3] (2,4) -- (2,2) -- (4,2) -- (4,3) -- (3,3) -- (3,4) -- cycle;
\fill[line width=0pt,color=cqcqcq,fill=cqcqcq,fill opacity=0.3] (4,2.5) -- (4.993274931794596,2.5) -- (5,2) -- (4,2) -- cycle;
\fill[line width=0pt,color=cqcqcq,fill=cqcqcq,fill opacity=0.3] (5,3) -- (4.993274931794596,2.5) -- (6.000167128301285,2.5) -- (6,3) -- cycle;
\fill[line width=0pt,color=cqcqcq,fill=cqcqcq,fill opacity=0.3] (6.000167128301285,2.5) -- (6,1) -- (8.5,1) -- (8.5,2.5) -- cycle;
\draw [->,line width=2pt] (0,0) -- (10,0);
\draw [->,line width=2pt] (0,0) -- (0,6);
\draw [line width=2pt,color=qqwuqq] (0,5)-- (2,5);
\draw [line width=2pt,color=qqwuqq] (2,2)-- (5,2);
\draw [line width=2pt,color=qqwuqq] (5,3)-- (6,3);
\draw [line width=2pt,color=ffqqqq] (0,4)-- (3,4);
\draw [line width=2pt,color=ffqqqq] (3,3)-- (4,3);
\draw [line width=2pt,color=ffqqqq] (4,2.5)-- (8.5,2.5);
\draw [line width=2pt,color=ffqqqq] (8.5,1.02)-- (10,1.0193827843692975);
\draw [line width=2pt,color=qqwuqq] (6,1)-- (10,1);
\draw (4.521791218946116,-0.048293933392829155) node[anchor=north west] {Filtration};
\draw (-0.75,5.866910223704526) node[anchor=north west] {$\chi$};
\draw [fill=qqwuqq] (0,5) circle (2.5pt);
\draw [fill=qqwuqq] (2,2) circle (2.5pt);
\draw [fill=qqwuqq] (5,3) circle (2.5pt);
\draw [fill=qqwuqq] (6,1) circle (2.5pt);
\draw [fill=ffqqqq] (0,4) circle (2.5pt);
\draw [fill=ffqqqq] (3,3) circle (2.5pt);
\draw [fill=ffqqqq] (4,2.5) circle (2.5pt);
\draw [fill=ffqqqq] (8.5,1.02) circle (2.5pt);
\end{tikzpicture}}
    \caption{Two Euler Characteristic Curves in red and green. The absolute value of their difference is highlighted in shaded gray.}
    \label{fig:ECC_distance}
\end{figure}
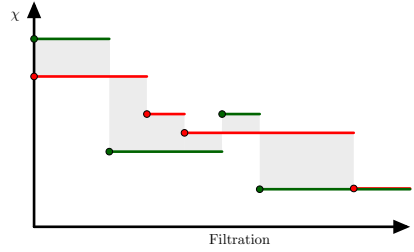

The proof presented in this section is inspired by the stability result for persistence functions by Chung and Lawson \cite{chung_persistence_2022}. They analyze stability of a wide class of persistence curves and obtain a general bound (see Theorem 1 in \cite{chung_persistence_2022}). However, trying to specialize this result to the simple Betti curve case leads a term that depends on the number of points in the Persistence Diagram. Hence the authors claim that Betti curves are unstable. 

We will instead carry out the proof focusing exclusively on Betti curves, by doing so a stability result can be obtained.

\begin{definition}
    Let $K$ be a cell complex with filtration function $f$. Its $k-$th Betti curve is a function that assigns to each filtration level the $k-$th Betti number of the corresponding subcomplex.
    \[
        \beta_k(K, t) = \beta_k(K_t) \quad .
    \]
\end{definition}

Let now $D$ be the $k-$dimensional persistence diagram obtained from a filtered complex $K$. The Fundamental Lemma of Persistent Homology \cite{edelsbrunner_computational_2022} states that the $k-$th Betti number of the subcomplex $K_t$ can be obtained by counting the points in the diagram that lie in the box $(x,y) \, | \, x \leq t < y$,
\[
\beta_k(K_t) = \#[(b,d) \in D \, | \, b \leq t < d] .
\]
We can reformulate this statement by assigning to each point $(b,d)$ in the diagram its indicator function in the interval $[b,d)$, $I_{[b,d)}(t)=1$ if $t \in [b,d)$ and $0$ otherwise. This indicator functions are exactly the bars in the barcode representation. By doing so we can define the $k$-dimensional Betti curve as the step function obtained by summing up all these indicator functions. 
\begin{definition}
The $k-$th Betti curve for a persistence diagram $D$ with finitely many off diagonal point is
\begin{equation*}
\beta_k(D, t) = \sum_{(b,d) \in D}  I_{[b,d)}(t) .
\end{equation*}
\end{definition}

\begin{proposition}
Let $C$ and $D$ be two $k$-dimensional persistence diagrams. Their Betti curves are stable with respect to the 1-Wasserstein distance,
\begin{equation}
    \norm{\beta_k(C, t) - \beta_k(D, t)}_1 \leq 2W_1(C,D).
\label{eq:betti_stability}
\end{equation}
\label{proof:betti}
\end{proposition}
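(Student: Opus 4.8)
The plan is to exploit the fact that each Betti curve is, by definition, a sum of bar indicator functions, and that both the $L_1$ norm and the Wasserstein cost decompose over a matching. First I would fix an arbitrary matching $\eta : C \to D$ and use it to pair up the summands. Since $\beta_k(D,t) = \sum_{(b,d)\in D} I_{[b,d)}(t)$ is linear in its bars, and since only finitely many points lie off the diagonal (every diagonal-to-diagonal pair contributes the zero function, as $I_{[m,m)} \equiv 0$), I can rewrite the difference as
\[
\beta_k(C,t) - \beta_k(D,t) = \sum_{(b,d)\in C} \bigl( I_{[b,d)}(t) - I_{[b',d')}(t)\bigr), \qquad (b',d') = \eta(b,d),
\]
using that $\eta$ is a bijection so $\sum_{(b,d)\in C} I_{\eta(b,d)} = \beta_k(D,t)$. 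The triangle inequality for $\norm{\cdot}_1$ then reduces the whole estimate to a per-pair bound on $\norm{I_{[b,d)} - I_{[b',d')}}_1$.

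The key computation is this single-pair bound. I would use the decomposition $I_{[b,d)}(t) = \mathbf{1}[b \le t] - \mathbf{1}[d \le t]$, valid since $b \le d$, to split the difference of two bars into a difference of left-endpoint half-lines and a difference of right-endpoint half-lines. Because $\int_\mathbb{R} \abs{\mathbf{1}[b\le t] - \mathbf{1}[b' \le t]}\, dt = \abs{b - b'}$ (the two step functions disagree exactly on the interval between $b$ and $b'$), a second application of the triangle inequality gives
\[
\norm{I_{[b,d)} - I_{[b',d')}}_1 \le \abs{b - b'} + \abs{d - d'}.
\]
One then bounds $\abs{b-b'} + \abs{d-d'} \le 2\max\bigl(\abs{b-b'}, \abs{d-d'}\bigr) = 2\norm{(b,d) - (b',d')}_\infty$, and this is precisely where the constant $2$ enters: it converts the $\ell^1$ cost of displacing the two endpoints into the $\ell^\infty$ ground metric used by $W_1$. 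Summing over all matched pairs and taking the infimum over $\eta$ then yields \eqref{eq:betti_stability}.

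The part that needs the most care is the bookkeeping around the matching rather than any hard inequality. I would check that a point $(b,d)\in C$ sent to its diagonal projection $(m,m)$ with $m=\tfrac{b+d}{2}$ contributes exactly $\norm{I_{[b,d)}}_1 = d - b = 2\cdot\tfrac{d-b}{2} = 2\norm{(b,d)-(m,m)}_\infty$, so the per-pair bound stays tight and consistent for diagonal matches; and that essential (infinite-death) classes, which must be matched to one another for $W_1(C,D)$ to be finite, cancel in the difference so that no divergent integral arises. Once these finitely many cases are accounted for, the chain of triangle inequalities delivers the claimed bound directly.
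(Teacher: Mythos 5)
Your proof is correct, and while it shares the paper's overall skeleton---fix a matching, apply the triangle inequality, and reduce to a bound for each matched pair of bars---it obtains the crucial per-pair estimate by a genuinely different device. The paper proves $\norm{I_{[b,d)} - I_{[b',d')}}_1 \leq \abs{b-b'} + \abs{d-d'}$ through an explicit case analysis (overlapping bars, nested bars, disjoint bars), computing integrals in each configuration, and must separately argue that the disjoint configuration (its Case 3) never occurs under an optimal matching. Your half-line decomposition $I_{[b,d)}(t) = \mathbf{1}[b \le t] - \mathbf{1}[d \le t]$ makes all of that unnecessary: splitting the difference of two bars into a difference of birth half-lines and a difference of death half-lines gives the same bound uniformly in every configuration, including disjoint bars, where the bound is simply not tight but still valid, so no argument about optimality of the matching is needed at the pair level. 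Your bookkeeping is also slightly more careful than the paper's in two respects: you work with an \emph{arbitrary} matching and take the infimum only at the end, so you never need the optimal matching to be attained (the paper opens by assuming ``the optimal matching under the 1-Wasserstein distance is known''), and you explicitly verify the diagonal-match and essential-class cases, which the paper folds into a degenerate instance of its Case 2. What the paper's route buys in exchange is concreteness: its integral computations exhibit the exact value of $\norm{h_i}_1$ in the generic cases, making visible where the factor $2$ is or is not tight, whereas your algebraic argument trades that transparency for brevity and uniformity. Both arguments are sound and yield identical constants.
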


\begin{proof}
Let us consider two $k-$dimensional persistence diagrams $C, D$ and assume the optimal matching under the 1-Wasserstein distance is known. Moreover let us index the points in each diagram as $(b_i^C, d_i^C)$ and $(b_i^D, d_i^D)$ so that points with matching indices are paired under the optimal matching. The case when points from one diagram are matched to diagonal is described in the case 2 below. We can then write the difference between the two Betti curves as following
\begin{align*}
    \norm{\beta_k(C, t) - \beta_k(D, t)}_1 &= \norm{\sum_i I_{[b_i^C,d_i^C)}(t) - I_{[b_i^D,d_i^D)} (t)}_1 \label{eq:start} \\
    & \leq \sum_i \norm{I_{[b_i^C,d_i^C)}(t) - I_{[b_i^D,d_i^D)} (t)}_1 .
\end{align*}

Let us focus on a single term of the sum, $\norm{h_i(t)} = \norm{I_{[b_i^C,d_i^C)}(t) - I_{[b_i^D,d_i^D)} (t)}_1 $. Then, one of the following cases have to hold:

\underline{Case 1:} $b_i^C \leq b_i^D \leq d_i^C \leq d_i^D$ .

\begin{figure}[!ht]
    \centering
    \definecolor{wrwrwr}{rgb}{0.3803921568627451,0.3803921568627451,0.3803921568627451}

\begin{tikzpicture}[line cap=round,line join=round,>=triangle 45,x=0.5cm,y=0.5cm]
\draw [line width=2pt] (0,0)-- (5,0);
\draw [line width=2pt] (3,1)-- (10,1);
\draw [->,line width=0.8pt,color=wrwrwr] (0,0) -- (12,0);
\begin{scriptsize}
\draw [fill=black] (0,0) circle (2.5pt);
\draw[color=black] (0.22,0.51) node {$b^C$};
\draw [color=black] (10,1) circle (2.5pt);
\draw[color=black] (10.22,1.51) node {$d^D$};
\draw [fill=black] (3,1) circle (2.5pt);
\draw[color=black] (3.22,1.49) node {$b^D$};
\draw [color=black] (5,0) circle (2.5pt);
\draw[color=black] (5.22,0.49) node {$d^C$};
\end{scriptsize}
\end{tikzpicture}
    \caption{Case 1}
    \label{fig:case2}
\end{figure}

\begin{align*}
     \norm{h_i(t)} &= \int_{b_i^C}^{b_i^D}{\abs{I_{[b_i^C,d_i^C)}(t)} dt} + \int_{b_i^D}^{d_i^C}{\abs{I_{[b_i^C,d_i^C)}(t) - I_{[b_i^D,d_i^D)}(t)} dt } + \int_{d_i^C}^{d_i^D}{\abs{I_{[b_i^D,d_i^D)}(t)} dt} \\
     &= \int_{b_i^C}^{b_i^D}{\abs{1} dt} + \int_{b_i^D}^{d_i^C}{\abs{1-1} dt } + \int_{d_i^C}^{d_i^D}{\abs{1} dt} \\
     &= \abs{b_i^D - b_i^C} + \abs{d_i^D - d_i^C} \\
     &\leq 2 \max(\abs{b_i^D - b_i^C}, \abs{d_i^D - d_i^C}) .
\end{align*} \\

\underline{Case 2:} $b_i^C \leq b_i^D \leq d_i^D \leq d_i^C$ .

\begin{figure}[!ht]
    \centering
    \definecolor{wrwrwr}{rgb}{0.3803921568627451,0.3803921568627451,0.3803921568627451}

\begin{tikzpicture}[line cap=round,line join=round,>=triangle 45,x=0.5cm,y=0.5cm]
\draw [line width=2pt] (0,0)-- (10,0);
\draw [line width=2pt] (3,1)-- (5,1);
\draw [->,line width=0.8pt,color=wrwrwr] (0,0) -- (12,0);
\begin{scriptsize}
\draw [fill=black] (0,0) circle (2.5pt);
\draw[color=black] (0.22,0.51) node {$b^C$};
\draw [color=black] (5,1) circle (2.5pt);
\draw[color=black] (5.22,1.51) node {$d^D$};
\draw [fill=black] (3,1) circle (2.5pt);
\draw[color=black] (3.22,1.49) node {$b^D$};
\draw [color=black] (10,0) circle (2.5pt);
\draw[color=black] (10.22,0.49) node {$d^C$};
\end{scriptsize}
\end{tikzpicture}
    \caption{Case 2}
    \label{fig:case3}
\end{figure}

\begin{align*}
     \norm{h_i(t)} &= \int_{b_i^C}^{b_i^D}{\abs{1} dt} + \int_{b_i^D}^{d_i^D}{\abs{1-1} dt } + \int_{d_i^D}^{d_i^C}{\abs{1} dt} \\
     &= \abs{b_i^D - b_i^C} + \abs{d_i^C - d_i^D} \\
     &\leq 2 \max(\abs{b_i^D - b_i^C}, \abs{d_i^D - d_i^C}) .
\end{align*}

The matching of one point $(b_i^C, d_i^C) \in C$ with a point in the diagonal of $D$ is a degenerate Case 2 with $b_i^C \leq b_i^D = d_i^D \leq d_i^C$. Note that, because of this, $C$ and $D$ are not required to have the same number of off-diagonal points. 
\\

\underline{Case 3:} $b_i^C \leq d_i^C \leq b_i^D \leq d_i^D$
\begin{figure}[ht!]
    \centering
    \definecolor{wrwrwr}{rgb}{0.3803921568627451,0.3803921568627451,0.3803921568627451}

\begin{tikzpicture}[line cap=round,line join=round,>=triangle 45,x=0.5cm,y=0.5cm]
\draw [line width=2pt] (0,0)-- (5,0);
\draw [line width=2pt] (7,1)-- (10,1);
\draw [->,line width=0.8pt,color=wrwrwr] (0,0) -- (12,0);
\begin{scriptsize}
\draw [fill=black] (0,0) circle (2.5pt);
\draw[color=black] (0.22,0.51) node {$b^C$};
\draw [color=black] (5,0) circle (2.5pt);
\draw[color=black] (5.22,0.51) node {$d^C$};
\draw [fill=black] (7,1) circle (2.5pt);
\draw[color=black] (7.22,1.49) node {$b^D$};
\draw [color=black] (10,1) circle (2.5pt);
\draw[color=black] (10.22,1.49) node {$d^D$};
\end{scriptsize}
\end{tikzpicture}
    \caption{Case 3}
    \label{fig:case1}
\end{figure}

This case will never happen as a better matching can always be obtained by matching both points to the diagonal, which is a degenerate Case 2.

We have that $\norm{h_i(t)} \leq 2\max(\abs{b_i^D - b_i^C}, \abs{d_i^D - d_i^C})$ holds for every $i$. We can then write the difference between two Betti curves as
\begin{align*}
    \norm{\beta_k(C, t) - \beta_k(D, t)}_1 &\leq \sum_i \norm{I_{[b_i^C,d_i^C)}(t) - I_{[b_i^D,d_i^D)} (t)}_1  \\
    &\leq \sum_i{2 \max(\abs{b_i^D - b_i^C}, \abs{d_i^D - d_i^C}) } \\
    &= 2W_1(C, D) .
\end{align*}
\end{proof}

Thanks to the Euler-Poincaré formula, the Euler Characteristic Curve of a filtered complex $K$ can be obtained as the alternating sum of its Betti curves.

\[
ECC(K,t) = \sum_k (-1)^k \beta_k(K,t).
\]

A stability result for the ECCs can be immediately derived from \ref{eq:betti_stability} assuming that the complex $K$ has nonzero persistence diagrams in a finite number of dimensions, each of them containing a finite amount of off-diagonal points.

\begin{proposition}
    Let $X$ and $Y$ be two filtered cell complexes.
    The $L_1$ difference between the Euler Characteristic Curves of $X$ and $Y$ is bounded by the sum of the 1-Wasserstein distances between the corresponding $k$-dimensional persistence diagrams $Dgm_k(X)$, $Dgm_k(Y)$.
    \begin{equation}
        \norm{ECC(X, t) - ECC(Y, t)}_1 \leq \sum_{k} 2 W_1(Dgm_k(X), Dgm_k(Y)) .
    \label{eq:ECC_stability}
    \end{equation}
    Where the sum is over all dimensions in which the persistence diagrams are non-empty.
\end{proposition}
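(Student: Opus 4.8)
The plan is to reduce this statement to the already-established Betti curve stability of Proposition~\ref{proof:betti} by exploiting the Euler--Poincar\'e decomposition pointwise in the filtration parameter. First I would fix a filtration value $t$ and recall that for each sublevel set the Euler characteristic equals the alternating sum of Betti numbers, $\chi(X_t) = \sum_k (-1)^k \beta_k(X_t)$; since this holds for every $t$, it upgrades to an identity of functions $ECC(X,t) = \sum_k (-1)^k \beta_k(X,t)$, and likewise for $Y$. Subtracting these two expressions and grouping by dimension gives $ECC(X,t) - ECC(Y,t) = \sum_k (-1)^k \bigl(\beta_k(X,t) - \beta_k(Y,t)\bigr)$.

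Next I would take absolute values and integrate over $\mathbb{R}$. The triangle inequality, applied to the finite sum over dimensions inside the integral, lets me bound $\norm{ECC(X,t)-ECC(Y,t)}_1$ by $\sum_k \norm{\beta_k(X,t)-\beta_k(Y,t)}_1$; here the signs $(-1)^k$ drop out because $\abs{(-1)^k}=1$. At this point each summand is exactly the left-hand side of the Betti curve inequality~\eqref{eq:betti_stability}, so applying Proposition~\ref{proof:betti} dimension by dimension with $C = Dgm_k(X)$ and $D = Dgm_k(Y)$ yields the claimed bound $\sum_k 2 W_1(Dgm_k(X), Dgm_k(Y))$.

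The one place requiring genuine care, rather than formal manipulation, is the bookkeeping that makes each step legitimate, and this is exactly where the finiteness hypotheses enter. I would invoke the assumption that only finitely many diagrams are non-empty, so that the sum over $k$ is finite and the triangle inequality for integrals applies termwise without convergence issues. I would also note that each Betti curve must be integrable for its $L_1$ norm to be finite; this is guaranteed by assuming each diagram has finitely many off-diagonal points with finite death values, so that every bar $I_{[b,d)}$ has finite length and the step function $\beta_k(\cdot,t)$ has compact support. The main conceptual point, though ultimately a minor one, is recognizing that the pointwise Euler--Poincar\'e identity promotes to an identity of $L_1$ functions; once that is granted, the result follows mechanically from the previous proposition.
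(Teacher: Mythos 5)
Your proposal is correct and follows essentially the same route as the paper's proof: decompose the ECC into an alternating sum of Betti curves via the Euler--Poincar\'e formula, apply the triangle inequality to drop the signs, and invoke the Betti curve stability of Proposition~\ref{proof:betti} dimension by dimension. Your additional care about the finiteness hypotheses (finitely many non-empty diagrams, finitely many off-diagonal points) matches the assumptions the paper states just before the proposition, so nothing is missing.
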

\label{theo:ECC_stability}

\begin{proof}
It is an immediate consequence of \ref{proof:betti} and the triangular inequality.
\begin{align*}
        \norm{ECC(X, t) - ECC(Y, t)}_1 &= \norm{\sum_{k=0}^n (-1)^k ( \beta_k(Dgm_k(X), t) - \beta_k(Dgm_k(Y), t))}_1 \\
    &\leq \sum_{k=0}^n \norm{\beta_k(Dgm_k(X), t) - \beta_k(Dgm_k(Y), t)}_1 \\
    &\leq \sum_{k=0}^n 2 W_1(Dgm_k(X), Dgm_k(Y)) .
\end{align*} 
\end{proof}

The above Proposition \ref{theo:ECC_stability} is in explicit contrast with the claim that the Euler Characteristic Curve is unstable. In addition to the already mentioned work by Chung and Lawson \cite{Chevyrev_2020}, a similar statement can be found in \cite{beltramo_euler_2021} and \cite{Chevyrev_2020}.

\begin{remark}
    With reference to Figure \ref{fig:ECC_distance}, the left-hand side in \ref{theo:ECC_stability} is finite when the two ECCs agree from some filtration value onward. This is exactly what happens, for example, when considering curves obtained from full complexes, i.e. filtered complexes having a single simplex as a last element of a filtration: at some value all possible faces will have entered the filtration and so the Euler characteristic will stabilize at $1$. If this does not happen the difference between the two ECCs will be unbounded. At the same time, it is straightforward to show that if two filtered complexes have different Euler characteristic at $+\infty$ their homologies will have a different number of essential classes. This translates to a different number of points at infinity in the persistence diagrams, whose Wasserstein distance would then be unbounded. In this case, the above result will trivially be $+\infty \leq +\infty$.
\end{remark}

\subsection{Euler Characteristic Profiles}
We can immediately extend the notion of $L_1$ distances between ECCs to work in the general case of $n-$dimensional ECPs.

\begin{definition}
    Let $K_1, K_2$ be two multifiltered cell complexes. The $L_1$ distance between the corresponding $n-$dimensional Euler Characteristic Profiles is 
    \[
        ||ECP(K_1, v) - ECP(K_2, v)||_1 = \int_{\mathbb{R}^n} |ECP(K_1,v) - ECP(K_2,v)| dv  \quad .
    \]
\end{definition}

It is natural to ask whether the stability result in \ref{eq:ECC_stability} can be naturally extended to the multi-parameter case. In the existing literature, Chen et al. proposed the following weak $L_1$-metric in the case of bifiltered complexes (see Definition 3.2 in \cite{chen2022tampsgcnets}). Let us remind the proposed construction; consider two cell complexes $K_1$ and $K_2$ with a bifiltration function $F : K_{1,2} \rightarrow \mathbb{R}^2$. Let us denote with $f$ and $g$ the two real valued functions in the bifiltrations such that $F(\sigma) = ((f(\sigma), g(\sigma)))$ for every cell $\sigma$. Moreover, let us index the threshold values of $F$ as $I = \{(a_i, b_j) \, | \, 1 \leq i \leq m , 1 \leq j \leq n\}$. The idea behind Chen et al. construction is to fix one of of the two filtrations at a specific value and consider the distances between the single parameter persistence diagrams induced by the other filtration function. By considering the set of threshold values $I$ as a matrix with $i$ rows and $j$ columns , they define the \emph{$i^{th}$ column distance} for the $k-$dimensional PDs as $D^{i*}_k(K_1, K_2) = W_1 (D^g_k(K_1^{i*}), D^g_k(K_2^{i*}))$. Similarly, the \emph{$j^{th}$ row distance} is $D^{*j}_k(K_1, K_2) = W_1 (D^f_k(K_1^{*j}), D^f_k(K_2^{*j}))$. 

\begin{definition}[Definition 3.2 in \cite{chen2022tampsgcnets}]
    The weak $L_1$ metric between $K_1$ and $K_2$ is 
    \[
      D(K_1, K_2) = \max \{ \sum_{k=0}^M \sum_{i=1}^m D^{i*}_k (K_1, K_2), \: \sum_{k=0}^M \sum_{j=1}^n D^{*j}_k (K_1, K_2) \}  .
    \]
\label{def:L1_metric}
\end{definition}

Being able to recover the single parameter case, they prove the following stability result.

\begin{proposition}[Theorem 3.1 in \cite{chen2022tampsgcnets}]
    Let $K_1, K_2$ be two bifiltered cell complexes. The distance between the corresponding Euler Characteristic Surfaces is bounded by the weak $L_1$ metric metric between $K_1$ and $K_2$,
    \[
        ||ECP(K_1, v) - ECP(K_2, v)||_1 \leq c \cdot D(K_1, K_2) \: ,
    \]
    for some $c > 0$.
\label{prop:weak_stab}
\end{proposition}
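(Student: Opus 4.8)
The plan is to reduce this two-dimensional statement to the one-parameter stability result of Proposition~\ref{theo:ECC_stability} by slicing the bifiltration along one axis and integrating the resulting one-dimensional bounds. The key observation is that fixing the first parameter turns a slice of the profile into an honest one-parameter Euler Characteristic Curve: for a fixed value $s$ of $f$, the subcomplex $K^{s*} = \{\sigma \in K \mid f(\sigma) \leq s\}$ is itself a complex filtered by $g$, and
\[
  ECP(K, (s,t)) = \chi(\{\sigma \mid f(\sigma) \leq s,\ g(\sigma) \leq t\}) = ECC(K^{s*}, t),
\]
where the curve on the right is taken with respect to the $g$-filtration. Thus every horizontal slice of the surface is a one-parameter ECC whose $k$-dimensional persistence diagrams are exactly the $D^g_k(K^{s*})$ appearing in the column distances of Definition~\ref{def:L1_metric}.

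First I would apply Fubini's theorem to write the $L_1$ distance between the two surfaces as the iterated integral
\[
  \norm{ECP(K_1, v) - ECP(K_2, v)}_1 = \int_\mathbb{R} \left( \int_\mathbb{R} \abs{ECC(K_1^{s*}, t) - ECC(K_2^{s*}, t)}\, dt \right) ds .
\]
The inner integral is precisely the one-parameter $L_1$ distance between the ECCs of $K_1^{s*}$ and $K_2^{s*}$, so Proposition~\ref{theo:ECC_stability} bounds it by $2 \sum_k W_1(D^g_k(K_1^{s*}), D^g_k(K_2^{s*}))$. I would then use that the subcomplexes $K_\ell^{s*}$ change only when $s$ crosses one of the finitely many thresholds $a_i$ of $f$: on each interval $[a_i, a_{i+1})$ the integrand is constant and equal to $2\sum_k D^{i*}_k(K_1, K_2)$. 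Integrating over $s$ yields
\[
  \norm{ECP(K_1, v) - ECP(K_2, v)}_1 \leq \sum_i (a_{i+1} - a_i) \cdot 2\sum_k D^{i*}_k(K_1, K_2) \leq 2\Delta \sum_i \sum_k D^{i*}_k(K_1, K_2),
\]
where $\Delta = \max_i (a_{i+1} - a_i)$ is the largest spacing of the grid in the $f$-direction. Since $\sum_k \sum_i D^{i*}_k$ is one of the two terms in the maximum defining $D(K_1, K_2)$, it is bounded by $D(K_1, K_2)$, giving the claim with $c = 2\Delta$.

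The step I expect to be the main obstacle is controlling the integral outside the grid, which is exactly where finiteness of the left-hand side becomes delicate. Beyond the last threshold $a_m$ both slices reduce to the full complexes $K_1$ and $K_2$, and the $s$-integral over $[a_m, \infty)$ is finite only if their $g$-ECCs eventually agree — the same phenomenon noted in the remark following Proposition~\ref{theo:ECC_stability}. I would therefore either restrict the integration to the bounded support of the profiles or assume the two surfaces coincide outside the grid, as is implicit in the matrix formulation of Remark~\ref{rmk:ECP_VS_ECS}.

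The remaining point to keep in mind is that the constant $c = 2\Delta$ genuinely depends on the discretization spacing; slicing instead along $g$ produces $c = 2\Delta'$ with the row-direction spacing, and since each of the two sums in Definition~\ref{def:L1_metric} is individually bounded by their maximum $D(K_1, K_2)$, either slicing direction yields a valid bound. This grid-dependence of $c$ is precisely the weakness of the matrix-based surface distance highlighted earlier, and it is what motivates the functional definition of the profile in Definition~\ref{def:ECP}.
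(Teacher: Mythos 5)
There is nothing in the paper to compare your argument against: Proposition~\ref{prop:weak_stab} is imported verbatim from \cite{chen2022tampsgcnets} (their Theorem 3.1) and is stated without proof; the paper's only original contribution here is to criticize it, in Remarks~\ref{rmk:L1_metric} and~\ref{rmk:ECP_instable}. Judged on its own merits, your slicing argument is the natural route and is essentially sound: fixing the first parameter does turn each slice of the profile into an honest one-parameter ECC of the subcomplex $K^{s*}$, Tonelli (the integrand is non-negative, so finiteness is not needed to swap integrals) reduces the claim to the one-parameter bound of Proposition~\ref{theo:ECC_stability}, the slices are constant between consecutive thresholds, and either of the two sums in Definition~\ref{def:L1_metric} is dominated by their maximum $D(K_1,K_2)$. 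Your closing observation that $c = 2\Delta$ depends on the grid spacing is exactly the defect the paper itself identifies in Remark~\ref{rmk:L1_metric}.

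Two points need fixing, one cosmetic and one substantive. Cosmetically, on $[a_i, a_{i+1})$ the integrand is constant and \emph{bounded by} $2\sum_k D^{i*}_k(K_1,K_2)$, not equal to it. Substantively, your handling of the tail $[a_m,\infty)$ is too weak: there the inner integral equals the fixed constant $c_0 = \norm{ECC(K_1,\cdot)-ECC(K_2,\cdot)}_1$ taken with respect to $g$, so $\int_{a_m}^{\infty} c_0\, ds$ is finite only if $c_0 = 0$, i.e. the two $g$-ECCs agree almost everywhere; ``eventually agreeing'' curves still give $c_0 > 0$ and hence an infinite left-hand side. Moreover, in that situation $\Delta = \infty$, so your chain produces no finite constant at all --- and it cannot, since one can have an infinite left-hand side while the column sum you used stays finite (shift a single vertex in the $g$-direction: the discrepancy region $[a,\infty)\times[g_2,g_2+\epsilon)$ has infinite measure, yet every column Wasserstein distance is finite; only the \emph{row} sum, hence the max, blows up). This is precisely the degeneration to $+\infty \leq +\infty$ described in Remark~\ref{rmk:ECP_instable} and Figure~\ref{fig:ECP_instability}. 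Your proposed remedy --- truncating the filtration domain --- is the correct one, makes $\Delta$ finite, and repairs the whole chain; it is also exactly the device the paper adopts for its own ECP stability bound in the proposition that follows.
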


This constructions appears to be the natural generalization to multifiltration case of the stability result in \ref{eq:ECC_stability}. However, there are some fundamental problems that undermine the usefulness of such weak $L_1$ metric.

\begin{remark}
    In our opinion the sums over rows or columns in \ref{def:L1_metric} should be replaced with integrals over the filtration ranges. As already discussed in Remark \ref{rmk:ECP_VS_ECS}, this would allow for more flexibility when dealing with filtration thresholds whose spacing is not constant.
\label{rmk:L1_metric}
\end{remark}

\begin{figure}[!ht]
    \centering
    \scalebox{0.5}{\definecolor{qqwuqq}{rgb}{0,0.39215686274509803,0}

\begin{tikzpicture}[line cap=round,line join=round,>=triangle 45,x=1cm,y=1cm]

\fill[line width=2pt,color=qqwuqq,fill=qqwuqq,fill opacity=0.1] (4,2) -- (8,2) -- (8,8) -- (4,8) -- cycle;
\fill[line width=2pt,color=red,fill=red,fill opacity=0.1] (2,2) -- (4,2) -- (4,8) -- (2,8) -- cycle;
\draw [->,line width=2pt] (0,0) -- (0,8);
\draw [->,line width=2pt] (0,0) -- (8,0);
\draw [line width=2pt,color=qqwuqq] (2,2)-- (8,2);
\draw [line width=2pt,color=qqwuqq] (2,8)-- (2,2);
\draw [line width=2pt, color=qqwuqq, dash pattern=on 1pt off 10pt] (4,8)-- (4,2);
\draw [line width=2pt] (2,2)-- (4,2);
\draw [fill=qqwuqq] (2,2) circle (2.5pt);
\draw[color=qqwuqq] (2.25, 2.25) node {g};
\draw [fill=qqwuqq] (4,2) circle (2.5pt);
\draw[color=qqwuqq] (4.25, 2.25) node {g'};
\draw[color=black] (3,1.75) node {\Large $\epsilon$};
\end{tikzpicture}}
    \caption{Minimal counterexample for the instability of ECP. Consider a cell complex made by only one vertex whose $+1$ contribution appears at some point $g=(g_1, g_2) \in \mathbb{R}^2$ and move it to $g'=(g_1+\epsilon, g_2)$. Their difference, the region shaded in red, is unbounded}
    \label{fig:ECP_instability}
\end{figure}
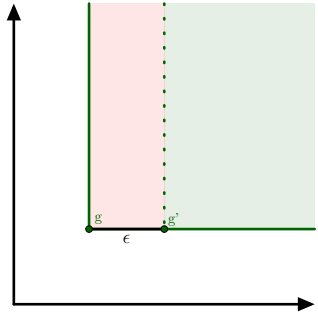

\begin{remark}
    Proposition \ref{prop:weak_stab} will evaluate to a trivial $\infty \leq \infty$ in most cases, even the simplest one. Consider for example the situation depicted in Figure \ref{fig:ECP_instability} of the ECP of a bifiltered complex $K_1$ made by just one 0-dimensional cell that appears at filtration value $(g_1, g_2)$. The ECP will then be $1$ in the cone $\{ (x,y) \in \mathbb{R}^2 : x \geq g_1 , \, y \geq g_2 \}$  and $0$ otherwise. We can obtain a different complex $K_2$ by perturbing the first filtration value by an $\epsilon$ amount $(g_1+\epsilon, g_2)$. The difference between the two ECPs will then be unbounded. At the same time, also the weak $L_1$ distance between $K_1$ and $K_2$ will be unbounded because in the interval $[g_1, g_1+\epsilon) \times [g_2, +\infty]$ the two complexes have a different number of essential classes and so the $W_1$ distance between the corresponding PDs will be infinite.
\label{rmk:ECP_instable}
\end{remark}

Because of the discussed issues, the stability result in~\cite{chen2022tampsgcnets}, while being formally correct, does not cover a lot of practically relevant cases.

However, in most applications we can \emph{truncate} the ECP, by limiting its filtration domain to i.e. the interval $[0, f_\infty]$ in every filtration dimension, where $f_\infty$ is a finite value. Note that this value at infinity should not be the same as the maximum filtration value of the complex's cells, but it should be strictly larger than the maximum filtration value. For example, in the case of images whose pixels have integer filtration values in the $[0, 255]$ range (see Section \ref{RGB}) we could choose $f_{\infty} = 256$ as truncation value. By doing so, the distance between every pair of ECP will be finite but it will of course depend on the truncation value. 
Using truncation, we can state the following result.

\begin{proposition}
    Let $K$ be a finite cell complex with a $n$-dimensional multifiltration $F: K \rightarrow \mathbb{R}^n$. We define $K^\epsilon$ as the complex obtained by perturbing the filtration values of each cell in $K$ by at most $\epsilon$ in in $l^\infty$ norm. Let us assume, for simplicity, that we truncate the domain of every filtration function to the same interval $[0, f_{\infty}]$.  We then have the following bound
    \[
    ||ECP(K, v) - ECP(K^\epsilon, v)||_1 \leq |K| \cdot d \cdot \epsilon^{n-1} \cdot f_{\infty} \quad ,
    \]

    where $|K|$ is the number of cells in the complex and $n$ is the number of filtration parameters.
\end{proposition}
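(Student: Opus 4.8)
The plan is to exploit that the Euler Characteristic Profile is, by additivity of $\chi$, a signed sum of indicator functions of upper orthants, one per cell, and then to reduce the global $L_1$ bound to a sum of per-cell volume estimates. Writing $U_\sigma = \{v \in \mathbb{R}^n : v \geq F(\sigma)\}$ for the upper set on which a cell $\sigma$ contributes, Definition~\ref{def:ECP} gives
\begin{equation*}
ECP(K, v) = \sum_{\sigma \in K} (-1)^{\dim \sigma} I_{U_\sigma}(v),
\end{equation*}
and the analogous expression holds for $K^\epsilon$ with $U_\sigma^\epsilon = \{v \geq F^\epsilon(\sigma)\}$. Since the perturbation acts cell-by-cell, I would pair each cell of $K$ with its image in $K^\epsilon$, apply the triangle inequality on the integrand, and use $\abs{(-1)^{\dim \sigma}} = 1$ to obtain
\begin{equation*}
\norm{ECP(K, v) - ECP(K^\epsilon, v)}_1 \leq \sum_{\sigma \in K} \int_{[0,f_\infty]^n} \abs{I_{U_\sigma}(v) - I_{U_\sigma^\epsilon}(v)} \, dv.
\end{equation*}

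The second step is to evaluate, for a fixed cell, the integral on the right, which is exactly the $n$-dimensional volume of the symmetric difference $(U_\sigma \triangle U_\sigma^\epsilon) \cap [0,f_\infty]^n$ between two axis-aligned orthants whose apexes $p = F(\sigma)$ and $q = F^\epsilon(\sigma)$ satisfy $\norm{p - q}_\infty \leq \epsilon$. Introducing the truncated side lengths $a_i = f_\infty - p_i$ and $b_i = f_\infty - q_i$ (which satisfy $\abs{a_i - b_i} \leq \epsilon$ and, after clamping to $[0,f_\infty]$, lie in $[0,f_\infty]$), a direct inclusion–exclusion computation of $\mathrm{Vol}(A) + \mathrm{Vol}(B) - 2\,\mathrm{Vol}(A \cap B)$ shows this volume equals $\prod_i a_i + \prod_i b_i - 2\prod_i \min(a_i, b_i)$. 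The goal is then to bound this quantity by $d \cdot \epsilon^{n-1} \cdot f_{\infty}$, so that summing over the $|K|$ cells yields the claimed estimate.

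The main obstacle is precisely this per-cell volume bound. The natural tool is a decomposition of the symmetric difference into \emph{slabs}: $U_\sigma \triangle U_\sigma^\epsilon$ is contained in the union over the $n$ coordinate directions of the regions pinched between the two apex hyperplanes $\{v_j = p_j\}$ and $\{v_j = q_j\}$, each slab having width at most $\epsilon$ in direction $j$ while retaining extent up to $f_{\infty}$ in the remaining $n-1$ directions. The delicate point is how the $\epsilon$-widths in distinct directions interact: one must argue that the perturbations in separate coordinates combine so that only a single factor of $f_{\infty}$ survives while the other $n-1$ factors collapse to size $\epsilon$, producing the stated form $d \cdot \epsilon^{n-1} \cdot f_{\infty}$ rather than the cruder bound that a plain slab/union estimate first suggests. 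It is exactly here that the hypotheses on the truncation and the joint geometry of the two orthants must be invoked with care, and I expect this refinement of the symmetric-difference volume — matching the precise placement of the exponents — to be the crux of the argument.
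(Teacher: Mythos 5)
Your setup is exactly the paper's strategy: both you and the authors write $ECP$ as the signed sum $\sum_{\sigma \in K} (-1)^{\dim \sigma} I_{U_\sigma}$, pair each cell with its perturbed copy, and reduce via the triangle inequality to a per-cell bound on the volume of the truncated symmetric difference of two upper orthants; your inclusion--exclusion formula $\prod_i a_i + \prod_i b_i - 2\prod_i \min(a_i,b_i)$ for that volume is also correct. But the ``crux'' you rightly could not supply does not exist: the per-cell bound $d \cdot \epsilon^{n-1} \cdot f_\infty$ is \emph{false} for $n \geq 3$. Take $K$ to be a single vertex with $F(\sigma) = (0,\ldots,0)$, perturbed to $(\epsilon,\ldots,\epsilon)$. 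The truncated $L_1$ difference is $f_\infty^n - (f_\infty - \epsilon)^n \geq n\,\epsilon\,(f_\infty - \epsilon)^{n-1}$, which for $n=3$, $f_\infty = 1$, $\epsilon = 1/10$ equals $0.271$, exceeding the claimed $n\,\epsilon^{n-1} f_\infty = 0.03$. So no refinement of the slab geometry can make $n-1$ of the factors collapse to size $\epsilon$; the ``cruder bound that a plain slab/union estimate first suggests'' is in fact the true order of magnitude.

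The paper's own proof commits precisely the exponent swap you were suspicious of: it covers the cone difference by $n$ regions, ``each of them corresponding to a shift of $\epsilon$ in the direction of one of the axis,'' and then asserts these cuboids have ``base $\epsilon^{n-1}$ and height $f_\infty$.'' Each such slab actually has thickness $\epsilon$ in a \emph{single} coordinate direction and extent up to $f_\infty$ in the remaining $n-1$ directions, hence volume at most $\epsilon \cdot f_\infty^{n-1}$; the printed form agrees with this only when $n=2$ (the case drawn in Figure \ref{fig:ECP_instability}, where $\epsilon^{n-1} f_\infty = \epsilon\, f_\infty^{n-1}$), which is presumably how the error arose. Your slab decomposition therefore already constitutes a complete and correct proof of the repaired statement $\norm{ECP(K,v) - ECP(K^\epsilon,v)}_1 \leq |K| \cdot n \cdot \epsilon \cdot f_\infty^{n-1}$, which is what the proposition should assert (note the paper's $d$ and $n$ both denote the number of filtration parameters). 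In short: your attempt is not a failed proof of a true claim, but an honest near-proof that stalls exactly where the stated bound, and the paper's argument for it, break down.
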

\begin{proof}
    Let us consider a single cell $\sigma \in K$ with filtration value $g = (g_1, \cdots, g_d)$ . Its contribution to the ECP will be $(-1)^{dim(\sigma)}$ in the cone above $g$ (i.e. for all points $x \in \mathbb{R}^d$ such that $g \leq x$ coordinate-wise). Let $\sigma'$ be the corresponding cell in $K^\epsilon$ whose filtration values have been maximally perturbed to $g' = (g_1 + \epsilon, \cdots, g_d + \epsilon)$. The volume of the region which is in the cone of $g$ but not on the cone of $g'$ can be bounded by a sum of $n$ $n$-dimensional cuboids of base $\epsilon^{n-1}$ and height $f_{\infty}$, each of them corresponding to a shift of $\epsilon$ in the direction of one of the axis, $V_\sigma \leq d \cdot \epsilon^{n-1} \cdot f_{\infty}$, where the inequality is due to the fact that cuboids can have non-empty intersection.
    One of such cuboids is shaded in red in Figure \ref{fig:ECP_instability}. 
    Multiplying by the total number of cells give us the bound.
\end{proof}
%
%
%
%
%
%
%
%
%
%
%
%
%
%
%
%
%
%
%
\section{Algorithms}
\label{sec:Algo}

Recall that the Euler characteristic of a cell complex is the alternating sum of the number of its cells in each dimension. The contribution of each cell will thus be plus or minus one depending the dimension of the cell. Moreover, this contribution will appear at the cell's filtration level. Therefore, if we are able to obtain a list of all cells with their filtration values we can compute the Euler characteristic at each filtration level. This is the main idea behind the following algorithms, which will always return what we will denote as \textsc{list\_of\_contributions}, a list of pairs $(f(\sigma), (-1)^{dim(\sigma)})$ that stores each cell's contribution to the EC at the cell filtration level. Once this pairs have been sorted in ascending order with respect to the filtration, the Euler Characteristic Curve can be reconstructed by progressively summing up the contributions of following elements in the list.

\begin{remark}
    Roune and Sáenz de Cabezón \cite{roune_complexity_2011} proved that computing the Euler characteristic of a simplicial complex given by its vertices and facets is  is \#-P-complete. Even if their result does not mention filtered complexes, it follows from it that the problem of computing the ECC is at least P-complete. Otherwise, by contradiction, we could construct an arbitrary filtration of the considered complex, and look at the end value of the curve to obtain the Euler characteristic of the complex in polynomial time.
\end{remark}

\subsection{Vietoris-Rips complexes}
\label{sec:VR}
In this section we will present a distributed algorithm to compute the Euler Characteristic Curve of a Vietoris-Rips simplicial complex obtained from a collection of points in $\mathbb{R}^n$.

\begin{definition}
    Let $X$ be a finite collections of points in $\mathbb{R}^n$, also denoted as a point cloud.
    Given a parameter $\epsilon \leq 0$, the \emph{Vietoris-Rips complex} constructed from $X$ is the collection of all subset of diameter at most $2\epsilon$ , where the diameter is the greatest distance between any pair of vertices 
    \[
    \text{V-R}(X, \epsilon) = \{ \sigma \subseteq X \mid diam(\sigma) \leq 2\epsilon \} \quad .
    \]

    The filtration of each simplex is given by its diameter.
\end{definition} 

The Vietoris-Rips complex is a \emph{flag complex}, this means that a subset $S$ of vertices is in the complex if every pair of vertices in $S$ is in the complex.  This is analogous to saying that the Vietoris-Rips complex is completely determined by its 1-skeleton graph as there is a 1 to 1 correspondence between simplices in the complex and cliques in its 1-skeleton graph

Therefore, it is straightforward to see that listing all the simplices in a Vietoris-Rips complex is equivalent to perform a cliques count of its 1-skeleton graph \cite{boissonnat_simplex_2014}. In order to compute the contributions to the ECC we need to find an efficient and distributed way to list all cells in the simplex (i.e. all cliques in the 1-skeleton graph), and their filtration values (i.e. the length of the longest edge in each clique), this can be achieved in the following way. Given an ordered list of points $X = \{x_i : i \in [1, n] \}$\footnote{The points can be ordered in an arbitrary way.} and a maximum distance $\epsilon$, for each point $x_i$ we build its local graph $G_i$ of \emph{subsequent neighbours}, namely all points $x_j \in B(x_i, \epsilon) \cap X$ with $j>i$. For each $G_i$ we list all of its cliques that contain $x_i$. They will correspond to simplices with $x_i$ being the smallest vertex in the chosen ordering of points. This way each simplex $\sigma$ in the V-R complex will be generated exactly once, when considering the local graph of its lowest vertex in the considered ordering. 

Algorithm \ref{algo:local_contribution_VR}, that uses this idea, describes a way to list all the simplices of increasing dimension. At each iteration we obtain a list of $d$-dimensional simplices (given as collections of vertices) and, for each of them, a list of common subsequent neighbours of its vertices. We can then extend each simplex to a $(d+1)$-dimensional one by adding one common neighbour to the collections of vertices. When doing so we need to update the simplex's filtration value if one of the newly added edges is longer than the current filtration. Moreover, we need to update the list of common subsequent neighbours by intersecting it with the subsequent neighbours of the newly added vertex. Once we have obtained all possible $(d+1)$-simplices, we carry out this extension procedure one dimension higher. All of these operations are performed at the local graph of each vertex. The procedure ends when no simplex can be extended, i.e. when all maximal simplices have been listed.
This construction might be understood as a breadth-first traversal of the simplex tree \cite{boissonnat_simplex_2014}.  

The main advantages of the proposed algorithm are two: it does not require to construct the whole complex, leading to a significant decrease in memory utilization; it considers each point separately, allowing the computations to be carried out independently.

The inputs of our algorithm are $X$, a ordered list of points in $\mathbb{R}^n$ and a maximum filtration value $\epsilon$. The output is \textsc{list\_of\_contributions}, an ordered list of pairs. For each simplex $\sigma$ we store its contribution as a tuple ($(-1)^{dim(\sigma)}, f(\sigma)$). The output list will sorted according to the filtration values.


\begin{algorithm}[H]
\SetAlgoLined
  \KwInput{Ordered point cloud $X$, $\epsilon > 0$}
  \KwOutput{A ordered list of pairs (filtration, $\pm 1$)}
  Create an empty vector $C$ \\
 \For{every point $x_i$ in $X$}
    {
    create the local graph $G_i$ of subsequent neighbours of $x_i$\;

    simplices = [$x_i$] \;
    filtrations = [0] \;
    common$\_$subseq$\_$neighs = [[subseq$\_$neigh($G_i, x_i$)]] \;
    
    \While{simplices NOT empty} {
    \For{every simplex $\sigma$ $\in$ simplices} {
         add to $C$ the tuple  (filtration($\sigma$), $(-1)^{dim(\sigma)}$)\;
         }
    INCREASE$\_$DIMENSION($G_i$, simplices, common$\_$subseq$\_$neighs) \;
    }
    }
 sort $C$ according to the filtration value \;
 \Return{$C$} 
 \caption{COMPUTE LOCAL CONTRIBUTIONS V-R}
 \label{algo:local_contribution_VR}
\end{algorithm}

\begin{algorithm}[H]
\SetAlgoLined
  \KwInput{local graph $G_i$, simplices, common$\_$subseq$\_$neighs}
  new$\_$simplices = [] \;
  new$\_$filtrations = [] \;
  new$\_$common$\_$subseq$\_$neighs = [] \;
 \For{every simplex $\sigma$ $\in$ simplices}
    {
    \For{every $n\in$ common$\_$subseq$\_$neighs[$\sigma$]} {
         new$\_$simplices.append($\sigma$+[$n$])\;
         \BlankLine
         
         consider all the edges from vertices of $\sigma$ to $n$ and take the longest one \;
         new$\_$f = MAX( filtration($\sigma$) , length longest edge ) \;
         new$\_$filtrations.append( new$\_$f ) \;
         \BlankLine
         
         compute the intersection between the current common subsequent neighbours of $\sigma$ and the subsequent neighbours of $n$ in $G_i$ \;
         
         new$\_$common$\_$subseq$\_$neighs.append( intersection ) \;
         }
    }
  simplices = new$\_$simplices \;
  filtrations = new$\_$filtrations  \;
  common$\_$subseq$\_$neighs = new$\_$common$\_$subseq$\_$neighs \;
 \caption{INCREASE\_DIMENSION}
 \label{algo:increase_dim}
\end{algorithm}

Note that the Algorithm~\ref{algo:local_contribution_VR} is correct. Firstly, every simplex in the Vietoris-Rips complex will be generated. It will happen when its smallest vertex in the considered order will be considered in the \emph{for} loop. Secondly, each simplex will be generated only once in the \emph{INCREASE\_DIMENSION} procedure. A simplex $\sigma = [v_0,\ldots,n_{n-1},v_n]$, where $v_0 < \ldots < n_{n-1} < v_n$ will be generated from a simplex $[v_0,\ldots,n_{n-1}]$ by adding $v_n$ as a common neighbour of its vertices.

\subsection{Time performance}
The worst case scenario occurs when the the 1-skeleton graph is fully connected. Assuming the point cloud consist of $n$ points the resulting V-R complex will contain $2^n - 1$ simplices. In this case the time complexity of Algorithm \ref{algo:local_contribution_VR} is $\mathcal{O}(2^{n - 1} n)$. More details are provided in Appendix \ref{apdx:time}.

\subsection{Memory performance}
Assuming the worst case scenario, the size of the output list of contributions is $O(n^2)$ while the maximal memory required at one intermediate step is $O(2^n / \sqrt{n})$.
More details are provided in Appendix \ref{apdx:memory}.

\subsection{Choice of the vertex ordering}
Note that the total running time of the fully parallelized Algorithm \ref{algo:local_contribution_VR} can be dominated by few vertices whose simplex tree is considerably larger than the others. This explains the plateau in Figure \ref{fig:num_cores}. This effect can be mitigated by choosing a different ordering of the vertices. One efficient choice is to order the vertices by increasing number of $\epsilon$-neighbours. Since the local graph for each vertex is constructed by considering only its subsequent neighbours, this ordering will produce more evenly-sized simplex trees. A simple example is showed in Figure \ref{fig:reordering_example} while the effect of this reshuffling on a larger dataset is shown in Figure \ref{fig:reordering_exp}.
\begin{figure}
    \centering
    \includegraphics[width=0.4\textwidth]{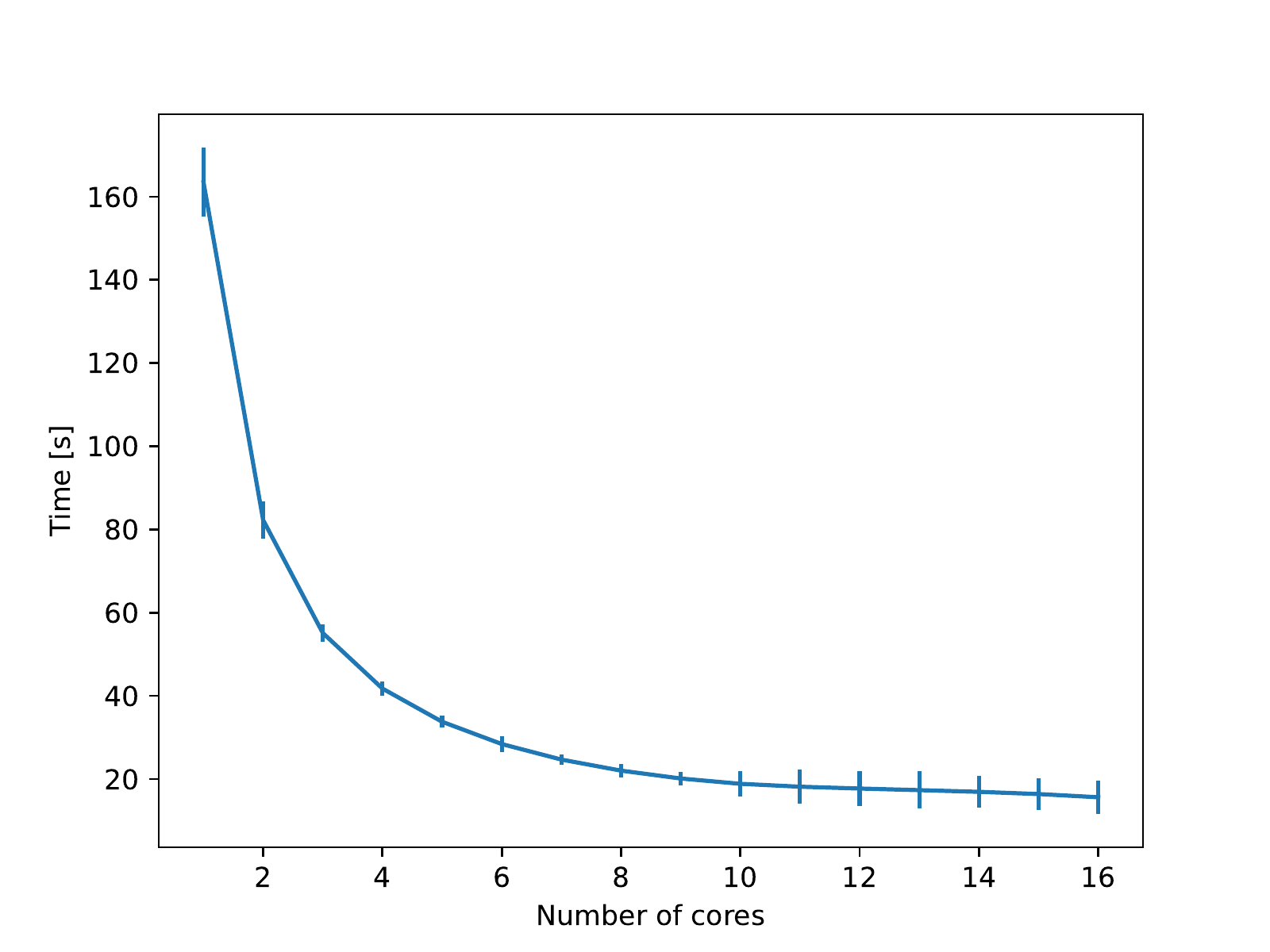}
    \caption{Average runtime over 10 runs of Algorithm \ref{algo:local_contribution_VR} as a function of the number of cores used. Contributions computed for the V-R complex obtained from 10000 points sampled from the unit 4-sphere up to a maximum radius of 0.4. Experiment run on a AMD Ryzen Threadripper PRO 5955WX cpu. Error bars are scaled up by a factor of 20 for visibility.}
    \label{fig:num_cores}
\end{figure}

\begin{figure}[!ht]
    \centering
    \hspace{2cm}
        \begin{subfigure}[t]{0.3\textwidth}
            \scalebox{0.7}{\begin{tikzpicture}[line cap=round,line join=round,>=triangle 45,x=1cm,y=1cm]

\draw [line width=1pt] (-1,-1)-- (0,0);
\draw [line width=1pt] (0,0)-- (2,1);
\draw [line width=1pt] (-1.42,1)-- (0,0);
\draw [line width=1pt] (-1.42,1)-- (-1,-1);

\draw [fill=black] (0,0) circle (2.5pt);
\draw[color=black] (0.1,0.3) node {$A$};
\draw [fill=black] (-1.42,1) circle (2.5pt);
\draw[color=black] (-1.3316407791065847,1.2438670270583445) node {$B$};
\draw [fill=black] (-1,-1) circle (2.5pt);
\draw[color=black] (-1,-1.3) node {$C$};
\draw [fill=black] (2,1) circle (2.5pt);
\draw[color=black] (2.0877585069494087,1.2438670270583445) node {$D$};

\fill[line width=0.5pt,fill=black,fill opacity=0.3] (0,0) -- (-1.42,1) -- (-1,-1) -- cycle;

\end{tikzpicture}} 
        \end{subfigure}
        \hspace{-1cm}
        \begin{subfigure}[t]{0.6\textwidth}
           \begin{forest}
[
  [A, no edge
    [AB
      [ABC]
    ]
    [AC]
    [AD]
  ]
  [B, no edge
    [BC
    ]
  ]
  [C, no edge
  ]
  [D, no edge]
]
\end{forest} 
        \end{subfigure}

    \hspace{2cm}
        \begin{subfigure}[t]{0.3\textwidth}
            \scalebox{0.7}{\begin{tikzpicture}[line cap=round,line join=round,>=triangle 45,x=1cm,y=1cm]

\draw [line width=1pt] (-1,-1)-- (0,0);
\draw [line width=1pt] (0,0)-- (2,1);
\draw [line width=1pt] (-1.42,1)-- (0,0);
\draw [line width=1pt] (-1.42,1)-- (-1,-1);

\draw [fill=black] (0,0) circle (2.5pt);
\draw[color=black] (0.1,0.25) node {$D$};
\draw [fill=black] (-1.42,1) circle (2.5pt);
\draw[color=black] (-1.3316407791065863,1.2438670270583445) node {$B$};
\draw [fill=black] (-1,-1) circle (2.5pt);
\draw[color=black] (-1,-1.3) node {$C$};
\draw [fill=black] (2,1) circle (2.5pt);
\draw[color=black] (2.087758506949406,1.2438670270583445) node {$A$};

\fill[line width=0.5pt,fill=black,fill opacity=0.3] (0,0) -- (-1.42,1) -- (-1,-1) -- cycle;

\end{tikzpicture}} 
        \end{subfigure}
        \hspace{-1cm}
        \begin{subfigure}[t]{0.6\textwidth}
           \begin{forest}
[
  [A, no edge
    [AD]
  ]
  [B, no edge
    [BC
      [BCD]
    ]
    [BD]
  ]
  [C, no edge
    [CD]
  ]
  [D, no edge]
]
\end{forest} 
        \end{subfigure}
    \caption{Different ordering of the vertices can produce different simplex trees. In the first row vertices are ordered by decreasing number of neighbours, in the second row by increasing number. The second choice produces more evenly-sized trees.}
    \label{fig:reordering_example}
\end{figure}
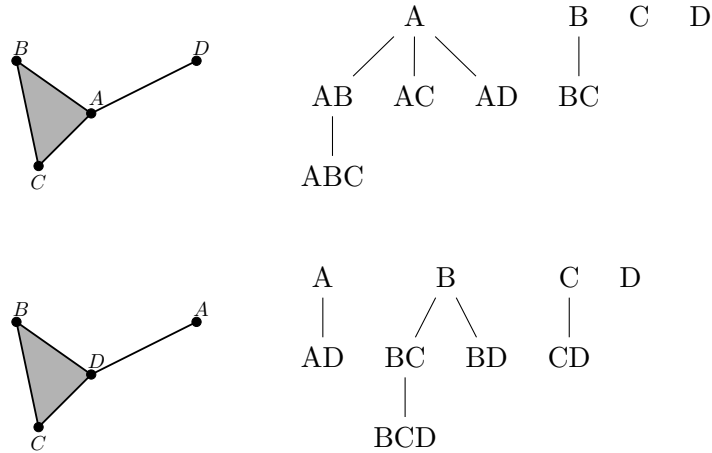

\begin{figure}
\begin{subfigure}[t]{0.45\textwidth}
    \centering
    \includegraphics[width=0.7\textwidth]{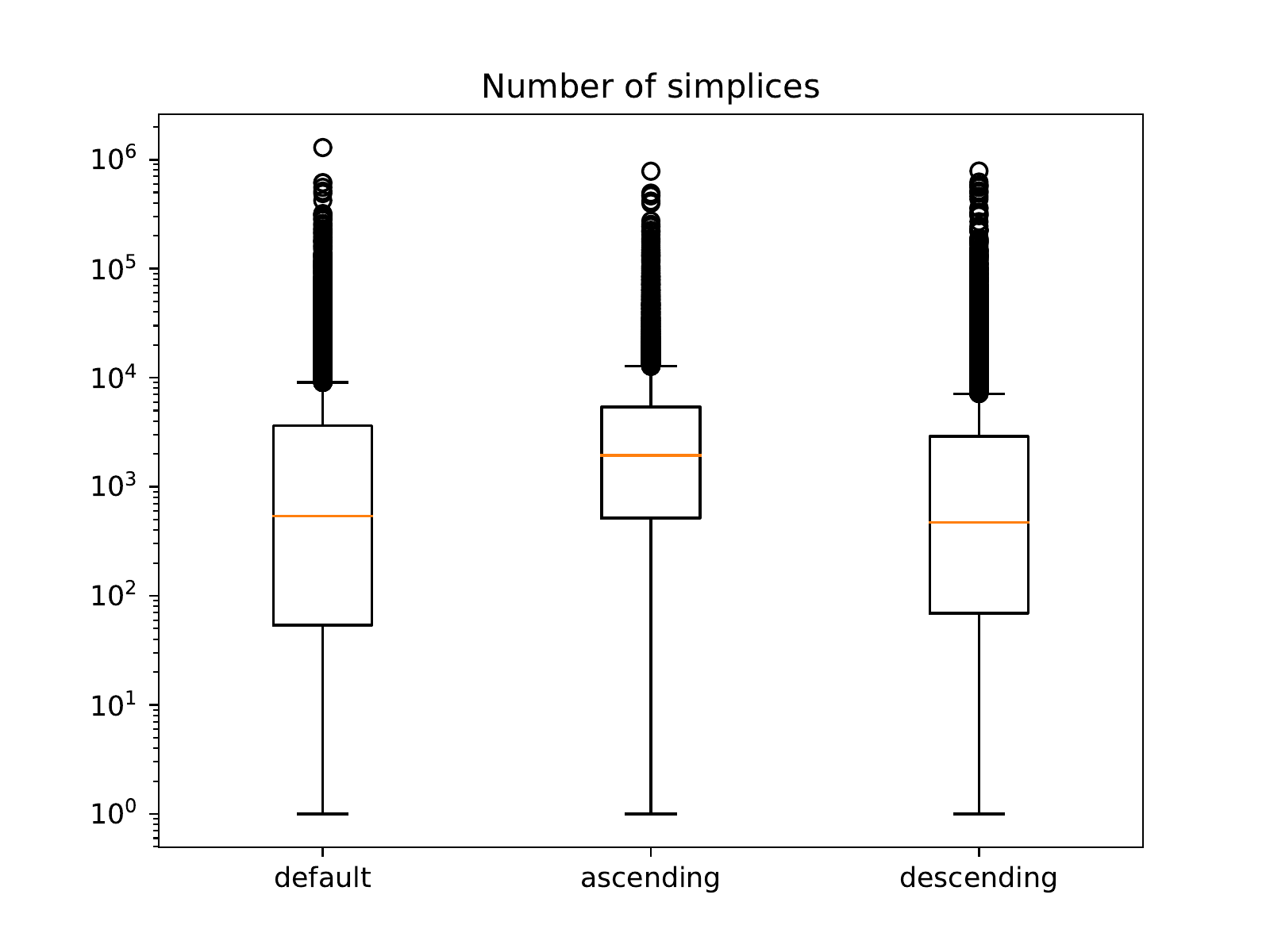}
\subcaption{}
\end{subfigure}
\hspace{-0.5cm}
\begin{subfigure}[t]{0.45\textwidth}
    \centering
    \includegraphics[width=0.7\textwidth]{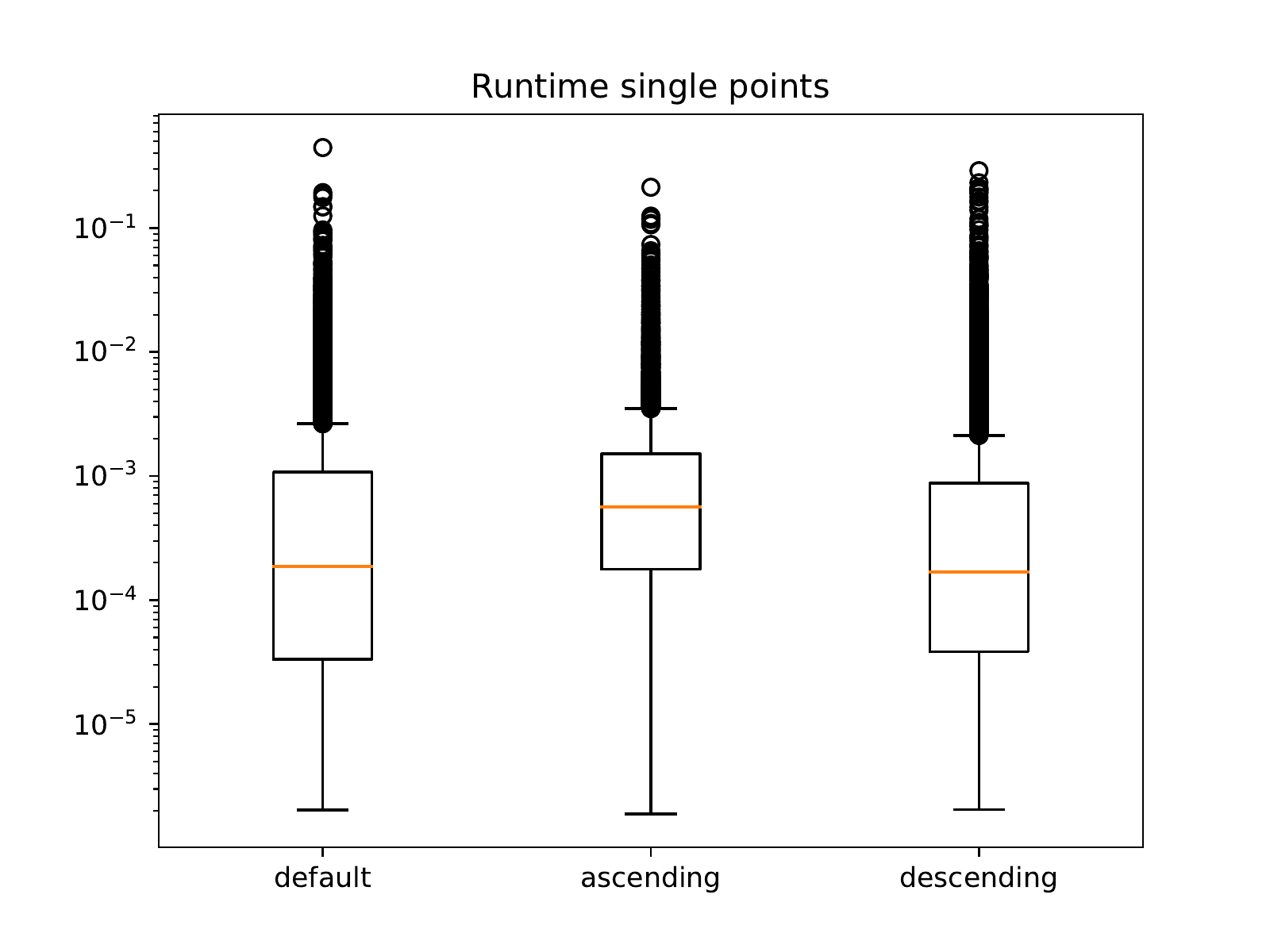}
\subcaption{}
\end{subfigure}
\caption{Effect of different orderings of vertices for the example in Figure \ref{fig:num_cores}. Selecting the ascending order allows to achieve a more even distribution in the number of simplices in each tree (panel A) thus reducing the number of very large trees which dominate the running time (panel B).}
\label{fig:reordering_exp}
\end{figure}


\newpage
\subsection{Cubical complexes}
\label{sec:cube}

Cubical complexes are the most used combinatorial structure to represent digital grayscale images and extract topological information from them. There are two ways to construct a cubical complex from an image, the V-construction and the T-construction. The former identifies pixels - also know as voxels, in case of images of arbitrary dimension - with the vertices (the 0-dimensional cells) of the cubical complex. Voxels's values are used to define the filtration on the vertices and the filtration of each other elementary cube is the maximal value of its vertices. The T-construction can be seen as the dual procedure, voxels's values are assigned to the top dimensional cubes and the filtration values are propagated to lower dimensional cells by taking the minimum over the cofaces. The relation between these two constructions is explored in a recent work by Bleile et al. \cite{bleile_persistent_2022}. In this paper we choose the T-construction, although the presented techniques translate easily to the V-construction.

Similar to the V-R case,  we are interested, given a grayscale $n$-dimensional image, in obtaining a list of contributions to the Euler characteristic of its corresponding cubical complex. As before, we then need to iterate over all cells $\sigma$ in the complex and store each contribution as a tuple $(f(\sigma), (-1)^{dim(\sigma)})$. This can be achieved in a streaming fashion by loading into memory a two voxel high slice of the image, iterating through the cells in the bottom row computing their contributions, and then moving the sliding window up by one voxel. To make sure we consider each cells contribution exactly once, at each iteration we consider one voxel and compute the contributions to the Euler characteristic of the cells in its \emph{upper closure}. Assuming that we can identify each top dimensional cell $c_i$ with the indices $(x_1, \cdots, x_n)$ of the corresponding voxel in the input $n$-dimensional image, we define the upper closure of $c_i$ as the set containing $c_i$ and all its faces that are shared with other top dimensional cells $c_j$ whose indices are $y_i=x_i$ or $y_i = x_i + 1$ for all $i$. An example of this procedure can be found in Figure \ref{fig:slice}.

\begin{algorithm}[H]
\SetAlgoLined
  \KwInput{A two voxels tick slice of an image, padded with $+\infty$}
  \KwOutput{A ordered list of pairs (filtration, $\pm 1$)}
  Create an empty vector $C$ \\
 \For{every voxel $c_i$ in the bottom row}
    {
    \For{every cell $\sigma$ in the upper closure of $c_i$} {
         add to $C$ the tuple  (filtration($\sigma$), $(-1)^{dim(\sigma)}$)\\
         }
    }
 sort $C$ according to the filtration value \\
 \Return{$C$} 
 \caption{COMPUTE LOCAL CONTRIBUTIONS CUBICAL}
 \label{algo:local_contribution_cube}
\end{algorithm}

\begin{figure}[!ht]
    \centering
    \includegraphics[width=0.3\textwidth]{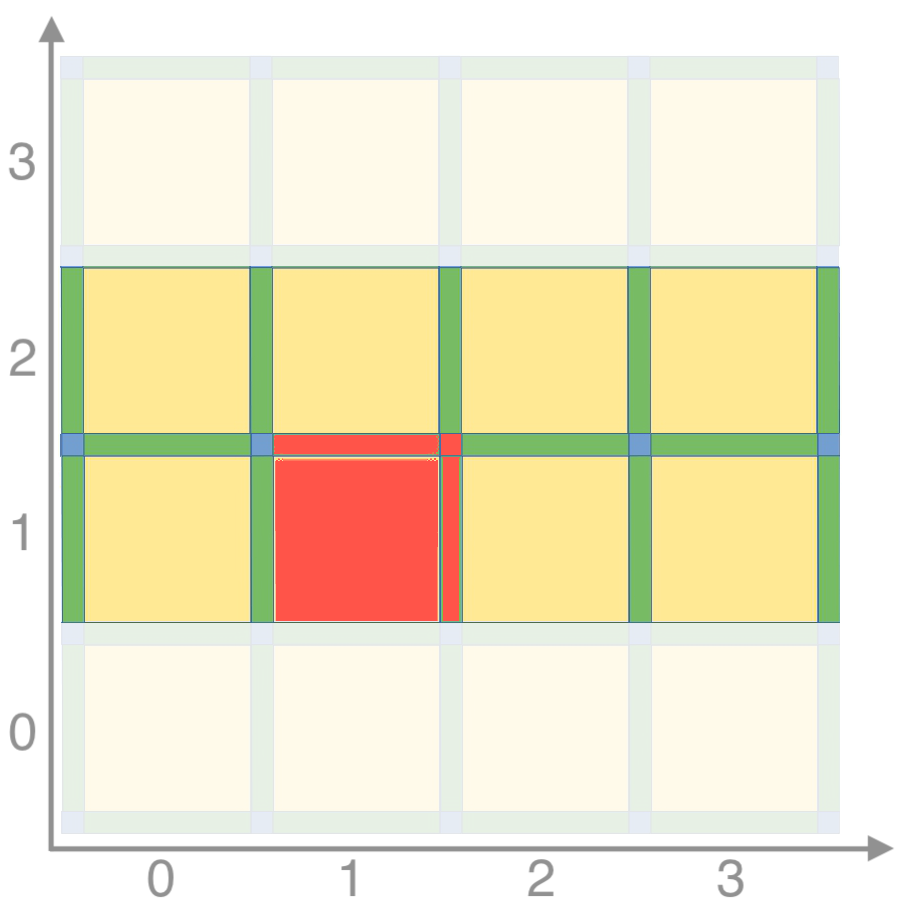}
    \caption{A slice of a cubical complex obtained from a 2 dimensional image. The image's pixels are associated to the top dimensional cells, depicted in yellow. Algorithm \ref{algo:local_contribution_cube} takes as input a two voxel tick slice of the image and iterates through the voxels in the bottom row. At each iteration a voxels is selected and the contributions of the cells in its upper closure are computed. In this example, the voxel at coordinates (1, 1) is selected and the considered contributions are depicted in red: the one coming from the corresponding 2-cell, the two from the 1-cells shared with (2,1) and (1,2) and the contribution from the 0-cell shared with (2,1) , (1,2) and (2,2).}
    \label{fig:slice}
\end{figure}

As already mentioned in Section \ref{sec:related_work}, a similar streaming algorithm to compute the ECC of grayscale images has been presented by Heiss and Wagner \cite{heiss_streaming_2017}. They also provide a fast open-source C++ implementation at \url{https://bitbucket.org/hubwag/chunkyeuler}. Recently Wang, Wagner and Chen \cite{wang_gpu_2022} provided a GPU implementation of the same algorithm available at \url{https://github.com/TopoXLab/GPU_ECC_SoCG2022}. However, there is a significant difference between their approach and the one we describe in Algorithm \ref{algo:local_contribution_cube}: they keep track of the faces introduced by each voxel by looking at the gray values of the voxel’s $3^d - 1$ neighbor and store the \emph{cumulative} change in the EC at the voxel's filtration value. This approach can not be generalized to the multiparameter filtration case as a cell could inherit different filtration values from different voxels. There are some small differences in the implementation too: CHUNKYEuler only works with integer filtration values and only accepts 'raw' binary files as input. Our implementation, while being not as fast as CHUNKYEuler, offers the user more flexibility in the input and choice of filtration (or multifiltration) values.

\subsection{Time and memory complexity}
Considering a $d$-dimensional image with $n$ voxels as input, the resulting cubical complex will have $3^d n$ cells. The running time of Algorithm \ref{algo:local_contribution_cube} is then linear in the number of cells in the complex with a multiplicative constant which is exponential in the dimension. This is not a problem in practice as images with dimension larger than 3 are not common in applications.
The memory requirement is just the space needed to store a two rows slice of the input image, the memory overhead for computing the local contributions for each voxel is negligible.

\subsection{From Euler Characteristic Curves to Profiles}
Both Algorithm \ref{algo:local_contribution_VR} and Algorithm \ref{algo:local_contribution_cube} can be immediately extended to compute the Euler Characteristic Profile of multifiltered Vietoris-Rips or cubical complexes. In the Vietoris-Rips case we require that all filtration functions should be defined on the vertices or the edges and then be extended to higher dimensional simplices by some user defined rule. This is to assure that the resulting multifiltered V-R complex is still a flag complex. In the case of cubical complexes we assume that the input images contains a $n-$tuple of numbers in each voxel - RGB images are a typical $n=3$ example - and values are propagated to lower dimensional cells by some user defined rules. In both cases the output of both algorithms will be a list of $(n+1)-$tuples $(f_1(\sigma), \cdots, f_n(\sigma), (-1)^{dim(\sigma)})$ that stores the list of contributions to the ECP at different points $f(\sigma) \in \mathbb{R}^n$. 

\begin{remark}
In above, the simplest case of so called $1$-critical multifiltration is discussed. In this case, each cell $\sigma$ appear in a unique value of the multifiltration. In a general case, a cell $\sigma$ may appear in multiple non-comparable values $p_1,\ldots,p_k$ of multifiltration. A simple generalization described below allows to adopt this presented algorithm to the general case; Let us assume that each $p_i$ is $n$ dimensional tuple, $p_i = (p_i^0,p_i^1,\ldots,p_i^n)$. We assume that $p_i$ and $p_j$ are not comparable provided $i \neq j$. It means that there exist a pair of coordinates $l \neq m$ so that $p_i^l < p_j^l$ and $p_i^m > p_j^m$. Then, the cell $\sigma$ contributes the value $(-1)^{\dim(\sigma)}$ for all the points $x \in \mathbb{R}^n$ for which there exist $i$ such that $x > p_i$. Note that the regions consisting of points greater that $p_i$ overlap for different $i \in \{1,\ldots,k\}$, hence we need to avoid double and multiple counting of the contributions. Below we describe a procedure to achieve it and enforce the contribution of exactly $(-1)^{\dim(\sigma)}$ for all $x > p_i$ for arbitrary $i \in \{1,\ldots,k\}$. For that purpose, given $i \neq j$, we define $p_i \vee p_j = ( \max(p_i^1,p_j^1),\max(p_i^2,p_j^2),\ldots,\max(p_i^n,p_j^n) )$.  Algorithm~\ref{algo:contributions_to_ECP}  define a set of points with appropriate contributions to enforce the required condition for all $x \geq p_i$ for all $i \in \{1,\ldots,k\}$.

\begin{algorithm}[H]
\SetAlgoLined
  \KwInput{$d$ - dimension of $s$, $p_1,\ldots,p_k \in \mathbb{R}^n$ - incompatible times of appearance of $\sigma$ in the multifiltration}
  \KwOutput{A collection of contributions of $\sigma$ to ECP}
  List $Contribution \leftarrow (p_i,(-1)^d)$, for $i \in \{1,\ldots,k\}$\\
  $P = p_i \vee p_j$ for every $i,j \in \{1,\ldots,k\}$\\
  Queue $L \leftarrow p_i \vee p_j$ for $i \neq j \in \{1,\ldots,k\}$\\
 \While{$L \neq \emptyset$}
    {
    p = dequeue(L)\\
    $P'$ = all elements $p' \in P$ such that $p' < p$\\
    \eIf{all elements $P'$ are already in $Contribution$}
    {
        $c =$ sum of values of elements in $P'$ in $Contribution$\\
        $Contribution \leftarrow (p,(-1)^d-c)$\\
    }
    {
        L = enqueue(p)}
    }

 \Return{$Contribution$} 
 \caption{CONTIBUTION OF $\sigma$ TO ECP}
 \label{algo:contributions_to_ECP}
\end{algorithm}

It is straightforward to see that for any given cell $\sigma$, its contributions to the ECP will change at at most in $p_i \vee p_j$ for $i,j \in \{1,\ldots,k\}$, where $\{p_1,\ldots,p_k\}$ are incompatible points in which $\sigma$ appears in the multifiltration. Algorithm~\ref{algo:contributions_to_ECP} scans all those points, and assigns the appropriate value (see line 1 and 9) to contributions to the ECP. Note that all points $p_1,\ldots,p_k$ have their contributions initially set in the line~1. 
Consequently, the presented algorithm will terminate, as in each iteration at least one $p$ will be added to the $Contribution$ list.  In addition, it explicitly enforces the correct contribution of the cell $\sigma$ to all points $x \geq p_i$ for any $i \in \{1,\ldots,k\}$. 

\end{remark}


\section{Data Structures for ECPs}

All the algorithms we described in the previous section output a list of contributions to the Euler Characteristic Profile. For a $n$-dimensional profile, each contribution in the list is a pair where the first entry is a $n$-tuple storing the coordinates in $\mathbb{R}^n$ at which the Euler characteristic varies by the integer values stored in the second item. When dealing with one dimensional ECCs it makes sense to sort the contributions according to their filtration value, in order to perform faster operations on them.

\subsection{Retrieving the EC at some filtration values}
\label{sec:retrieving_EC}
Given a ECP as a list of contributions, the first basic operation is to retrieve the value of the Euler characteristic at an arbitrary filtration value $f_*$. It can be obtained by summing up all the contributions in the ECP that appear at filtration values less or equal $f_*$. For a $d$-dimensional ECP this can be achieved in linear time with respect to the size of the contribution list. 
%
%
In the one dimensional case, we can take advantage of the total ordering on the list of contributions, since the filtration values $f_i \in \mathbb{R}$. By doing so we can build an auxiliary data structure storing the value of the Euler characteristic at each $f_i$, the points in which the ECC is changing value. This can be done in $O(n)$ time and space, where $n$ is the length of the list of contributions. Given such a structure, computing the value of the ECC at a given filtration $f_*$ boils down to the the search for the largest jump point $f_i < f_*$ and retrieving the value of the ECC therein. This can be achieved by interpolation search in $O(log(log(n)))$ time.

\subsection{Computing distances}

\subsubsection{Distances between Euler Characteristic Curves}
In Section \ref{sec:ECC_DIFF} we introduced the notion of difference between two ECCs, expressed in terms of the $L_1$ norm of the difference between the two curves. One should note that, in the case of finite Vietoris-Rips or cubical complexes, such a difference is always finite (but not bounded) as all ECCs will eventually stabilize to $1$ for a sufficiently large filtration value. In case when the construction of a Vietoris-Rips complexes is stopped at a certain diameter $2\epsilon$, and the final complexes have more than one infinite homology, it make sense to restrict the integral used in distance computations to an interval $[0,2\epsilon]$ in order to make the distances between the ECCs finite. 

Both Algorithm \ref{algo:local_contribution_VR} and \ref{algo:local_contribution_cube} return the computed ECC as list of pairs $(f_i, c_i)$ where $c_i$ is an integer representing the change in the Euler characteristic at filtration $f_i$. Such list is sorted in increasing order with respect to the filtration values. Using such data structure the difference between two ECCs can be computed in linear time with the size of the lists. Given two list of contributions $ECC_1$ and $ECC_2$ we can merge them in linear time, preserving the order. While merging we flip the sign of all the contributions coming from $ECC_2$. Let us denote the obtained list with $ECC_{1-2}$. Now the difference can be computed by iterating over the full list
\[
||ECC_1 - ECC_2||_1 = \sum_i (f_{i+1} - f_i) EC(f_i) \quad ,
\label{eq:diff_ECC}
\]

where $EC(f_i) = \sum_{j=0}^i c_j$ with respect to the ordering of $ECC_{1-2}$.

\subsubsection{Distances between Euler Characteristic Profiles}
Unfortunately the strategy proposed in the previous section is difficult to generalize in the multifiltration setting as there is no natural way to sort the list of contributions. We present here a basic algorithm to compute the distances between two ECPs and leave the search for potentially faster algorithm to future work.

Let $ECP_1$ and $ECP_2$ be two list of contributions representing two $n$-dimensional profiles. We can merge them in linear time, as in the one dimensional case, flipping the sign of the contributions in the second list. Let $N$ be the total number of contributions. With reference to Figure \ref{fig:ecp_grid}, the coordinates of such contributions will create a $n$-dimensional irregular grid of size $(N+1)$. The value of the EC inside each cuboid will be equal to the EC at the cuboid's bottom left corner and can be computed in $O(N)$. The $L_1$ distance between the two ECPs can then be obtained by summing up the values of the EC in each cuboid weighted by the cuboid's volume. Given that the number of cuboids is $(N+1)^d$, this operation can be computed in $O(N^{d+1})$. Note that the ECPs need to be truncated in order to avoid cuboids with infinite volume.

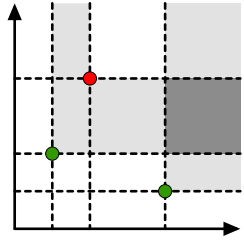
\begin{figure}
    \centering
    \scalebox{0.5}{\definecolor{uququq}{rgb}{0.25098039215686274,0.25098039215686274,0.25098039215686274}
\definecolor{aqaqaq}{rgb}{0.6274509803921569,0.6274509803921569,0.6274509803921569}
\definecolor{ffqqqq}{rgb}{1,0,0}
\definecolor{ttzzqq}{rgb}{0.2,0.6,0}

\begin{tikzpicture}[line cap=round,line join=round,>=triangle 45,x=1cm,y=1cm]
\fill[line width=2pt,color=aqaqaq,fill=aqaqaq,fill opacity=0.3] (1,6) -- (1,2) -- (4,2) -- (4,4) -- (2,4) -- (2,6) -- cycle;
\fill[line width=2pt,color=aqaqaq,fill=aqaqaq,fill opacity=0.3] (4,2) -- (4,1) -- (6,1) -- (6,2) -- cycle;
\fill[line width=2pt,color=aqaqaq,fill=aqaqaq,fill opacity=0.3] (4,4) -- (4,6) -- (6,6) -- (6,4) -- cycle;
\fill[line width=2pt,color=uququq,fill=uququq,fill opacity=0.6] (4,4) -- (4,2) -- (6,2) -- (6,4) -- cycle;
\draw [->,line width=2pt] (0,0) -- (0,6);
\draw [->,line width=2pt] (0,0) -- (6,0);
\draw [line width=2pt,dash pattern=on 4pt off 4pt] (1,6)-- (1,0);
\draw [line width=2pt,dash pattern=on 4pt off 4pt] (4,6)-- (4,0);
\draw [line width=2pt,dash pattern=on 4pt off 4pt] (0,2)-- (6,2);
\draw [line width=2pt,dash pattern=on 4pt off 4pt] (0,1)-- (6,1);
\draw [line width=2pt,dash pattern=on 4pt off 4pt] (0,4)-- (6,4);
\draw [line width=2pt,dash pattern=on 4pt off 4pt] (2,6)-- (2,0);

\draw [fill=ttzzqq] (1,2) circle (5pt);
\draw [fill=ttzzqq] (4,1) circle (5pt);
\draw [fill=ffqqqq] (2,4) circle (5pt);

\end{tikzpicture}}
    \caption{Example of a two dimensional ECP with three contributions. The green points indicate a $+1$ while the red point is a $-1$. The plane can then be subdivided in a $4 \times 4$ irregular grid. The coloring of each block indicates the value of the EC in that that region, white is $0$, light gray is $1$ and dark gray is $2$.}
    \label{fig:ecp_grid}
\end{figure}

\section{Vectorization}
Vectorizing the ECC / ECP is a critical step if we are interested in using these invariants in a Machine Learning framework.

\subsection{Curves}
Assume we are given an ECC whose filtration values ranges from $0$ to $f_{max}$. We can convert it to a vector by evenly sampling it $N$ times between $0$ and $f_{max}$. If we chose to include the endpoints the resulting vector will be $vec(ECC, N) = [EC(0), EC(\Delta), EC(2\Delta), \cdots, EC((N-2)\Delta), EC(f_{max})]$ , where $\Delta$ is the vectorization's resolution which is defined as $\Delta = f_{max} / (N-1)$.

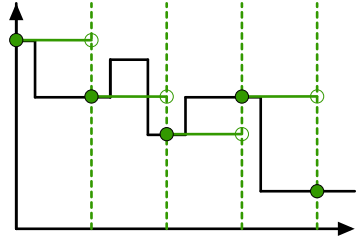
\begin{figure}[!ht]
    \centering
    \scalebox{0.5}{\definecolor{green}{rgb}{0.2,0.6,0}
\begin{tikzpicture}[line cap=round,line join=round,>=triangle 45,x=1cm,y=1cm]

\draw [->,line width=2pt] (0,0) -- (0,6);
\draw [->,line width=2pt] (0,0) -- (9,0);
\draw [line width=2pt,dash pattern=on 4pt off 4pt,color=green] (2,0)-- (2,6);
\draw [line width=2pt,dash pattern=on 4pt off 4pt,color=green] (4,0)-- (4,6);
\draw [line width=2pt,dash pattern=on 4pt off 4pt,color=green] (6,0)-- (6,6);
\draw [line width=2pt,dash pattern=on 4pt off 4pt,color=green] (8,0)-- (8,6);
\draw [line width=2pt] (0,0)-- (0,6);
\draw [line width=2pt] (0,5)-- (0.5,5);
\draw [line width=2pt] (0.5,5)-- (0.5,3.5);
\draw [line width=2pt] (0.5,3.5)-- (2.5,3.5);
\draw [line width=2pt] (2.5,3.5)-- (2.5,4.5);
\draw [line width=2pt] (2.5,4.5)-- (3.5,4.5);
\draw [line width=2pt] (3.5,2.5)-- (3.5,4.5);
\draw [line width=2pt] (3.5,2.5)-- (4.5,2.5);
\draw [line width=2pt] (4.5,3.5)-- (4.5,2.5);
\draw [line width=2pt] (4.5,3.5)-- (6.5,3.5);
\draw [line width=2pt] (6.5,3.5)-- (6.5,1);
\draw [line width=2pt] (6.5,1)-- (9,1);
\draw [line width=2pt] (2.5,3.5)-- (2.5,4.5);
\draw [line width=2pt,color=green] (0,5.021649433509434)-- (2,5.018701037033917);
\draw [line width=2pt,color=green] (4,2.5179763978882157)-- (6,2.5196501560453104);
\draw [line width=2pt,color=green] (6,3.51899027748033)-- (8,3.5225962109656295);
\draw [line width=2pt,color=green] (2,3.52066901913057)-- (4,3.517913512910509);

\draw [fill=green] (0,5.021649433509434) circle (5pt);
\draw [color=green] (2,5.018701037033917) circle (5pt);
\draw [fill=green] (2,3.52066901913057) circle (5pt);
\draw [fill=green] (4,2.5179763978882157) circle (5pt);
\draw [color=green] (6,2.5196501560453104) circle (5pt);
\draw [fill=green] (6,3.51899027748033) circle (5pt);
\draw [color=green] (8,3.5225962109656295) circle (5pt);
\draw [color=green] (4,3.517913512910509) circle (5pt);
\draw [fill=green] (8,1) circle (5pt);

\end{tikzpicture}}
    \caption{An Euler Characteristic Curve (black) and its vectorized version (green) with resolution $\Delta$. In this case, the vectorized version is stored as a vector of length 5 (the green filled-in points), but can be reconverted to a stepsize function.}
    \label{fig:ECC_vectorized}
\end{figure}

The vectorized ECC can be obtained by such vector as the union of $N-1$ left-closed, right-open intervals of length $\Delta$ that correspond to to sampling the value of the EC at filtration value $f_i$ and extending it till $f_{i+1}$. It makes sense then to ask whether it is possible to bound the difference between an ECC and its vectorized representation. Figure \ref{fig:ECC_vectorized} is an example of such difference when a curve is sampled in 5 points.

\begin{proposition}
\label{prop:ECC_vec}
Let $K$ be a filtered cell complex whose filtration values ranges from $0$ to $f_{max}$. The $L_1$ norm between the Euler Characteristic Curve of $K$ and its vectorized version at resolution $\Delta$ is bounded by
\begin{equation}
    || ECC(K) - vec(ECC(K), N)||_1 \leq \Delta( |K|/2 + F )
\label{eq:ECC_vect}
\end{equation}

where $|K|$ is the number of simplices in the complex and $F = \sum_{i=0}^{n-2}|EC(i \Delta) - EC((i+1)\Delta)|$ is the sum of the absolute value of the differences between consecutive values in the vectorized Euler Characteristic $vec(ECC(K), N)$.
\end{proposition}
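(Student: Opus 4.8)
The plan is to reduce the global $L_1$ bound to an estimate on each of the $N-1$ sampling intervals and then sum. Since $K$ is filtered over $[0,f_{max}]$, both $ECC(K)$ and its vectorization vanish below $0$ and equal $EC(f_{max})$ above $f_{max}$, so the integral defining $\norm{ECC(K)-vec(ECC(K),N)}_1$ is supported on $[0,f_{max}]=\bigcup_{i=0}^{N-2}[i\Delta,(i+1)\Delta)$. On the $i$-th interval the vectorized curve is the constant $EC(i\Delta)$, so it suffices to bound $\int_{i\Delta}^{(i+1)\Delta}\abs{ECC(K,t)-EC(i\Delta)}\,dt$ for each $i$ and add the results.

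First I would set $g_i(t)=ECC(K,t)-EC(i\Delta)$ on $[i\Delta,(i+1)\Delta)$. This is an integer step function with $g_i(i\Delta)=0$ that jumps by $(-1)^{\dim\sigma}$ exactly when a cell $\sigma$ enters the filtration. Writing $p_i$ for the number of cells whose filtration value lies in the interval (so that $\sum_i p_i\le\abs{K}$) and splitting them into the $p_i^+$ even-dimensional and $p_i^-$ odd-dimensional ones, the running value satisfies $-p_i^-\le g_i(t)\le p_i^+$ for every $t$, simply because at any time the partial sum of $\pm 1$ contributions cannot exceed the number of positive (resp.\ negative) jumps available. Hence the pointwise excursion bound $\abs{g_i(t)}\le\max(p_i^+,p_i^-)=\tfrac12\big(p_i+\abs{b_i}\big)$, where $b_i=p_i^+-p_i^-=EC((i+1)\Delta)-EC(i\Delta)$ is the net change across the interval. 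Integrating over an interval of length $\Delta$ gives $\int_{i\Delta}^{(i+1)\Delta}\abs{g_i}\,dt\le\Delta\big(\tfrac{p_i}{2}+\tfrac{\abs{b_i}}{2}\big)\le\Delta\big(\tfrac{p_i}{2}+\abs{b_i}\big)$. This is precisely where the factor $1/2$ on $\abs{K}$ originates: a naive triangle inequality on the indicator functions would only yield $\Delta\,p_i$, whereas the excursion bound exploits the cancellation between cells of opposite parity.

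Summing over $i=0,\dots,N-2$ and using $\sum_i p_i\le\abs{K}$ together with $\sum_i\abs{b_i}=\sum_i\abs{EC((i+1)\Delta)-EC(i\Delta)}=F$ then yields
\[
\norm{ECC(K)-vec(ECC(K),N)}_1\le\Delta\Big(\tfrac{\abs{K}}{2}+F\Big),
\]
which is the claimed inequality (in fact with $F/2$, a slightly stronger statement).

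The step I expect to require the most care is the bookkeeping at the sample points themselves, i.e.\ cells whose filtration value equals some $j\Delta$. Such a cell already belongs to $K_{j\Delta}$, so it does not move $g_j$ inside $[j\Delta,(j+1)\Delta)$, yet it does contribute to the sampled jump $EC((j-1)\Delta)\to EC(j\Delta)$ that enters $F$. To make the identification $b_i=EC((i+1)\Delta)-EC(i\Delta)$ and $\sum_i p_i\le\abs{K}$ simultaneously honest I would assign each cell to the unique interval $(i\Delta,(i+1)\Delta]$ containing its filtration value; one then checks that the within-interval net change $b_i'$ and the telescoping difference $b_i$ differ only by the contribution $e_i$ of the right-endpoint cells, with $\abs{e_i}$ bounded by the number of those cells, so that $\tfrac12(p_i^{\mathrm{strict}}+\abs{b_i'})\le\tfrac12(p_i+\abs{b_i})$ and the per-interval estimate survives unchanged. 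Alternatively, one may simply perturb the filtration to put all cell values in generic position off the grid, in which case no boundary terms arise and the argument above applies verbatim.
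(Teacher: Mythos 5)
Your proof is correct, and it is both tighter and more rigorous than the one in the paper. The paper argues by classifying the per-interval discrepancy into two types: Type I, where the sampled values $EC(i\Delta)$ and $EC((i+1)\Delta)$ differ, bounded by the rectangle of area $\Delta\,|EC(i\Delta)-EC((i+1)\Delta)|$ and summing to $\Delta\cdot F$; and Type II, where the sampled values agree but the curve oscillates in between, bounded by $\Delta\cdot|K|/2$, the factor $1/2$ coming from the constraint that the excursion must return to its starting value. It then closes by asserting, without a formal argument, that ``a generic situation can always be described as a combination of type I and type II errors.'' Your per-interval excursion bound replaces this case split by a single inequality: writing $p_i^{\pm}$ for the numbers of even- and odd-dimensional cells entering the $i$-th interval, the pointwise estimate $|g_i(t)|\le\max(p_i^+,p_i^-)=\tfrac12\bigl(p_i+|b_i|\bigr)$ handles drift and oscillation simultaneously, so no classification or genericity claim is needed. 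This buys you two things the paper does not have: a sharper constant, namely $\Delta\bigl(|K|/2+F/2\bigr)$ rather than $\Delta\bigl(|K|/2+F\bigr)$, since the combinatorial identity $\max(p^+,p^-)=\tfrac12(p+|b|)$ shows the net-change term only enters with weight one half; and an honest treatment of cells whose filtration value lands exactly on a grid point, via the assignment of cells to the half-open intervals $(i\Delta,(i+1)\Delta]$ and the check that endpoint contributions can only decrease the per-interval bound --- a bookkeeping issue the paper's pictorial argument silently ignores but which is needed to make the identification $\sum_i|b_i|=F$ legitimate.
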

\begin{proof}
We will prove the two terms in the bound separately as they come from two different types of errors. 

Type I errors occur when the EC  at two consecutive sampling points $f_i$ and $f_{i+1}$ is different. The simplest case is depicted in Figure \ref{fig:ECC_vect_proof}A, the EC changes values in between the sampling interval. We can upper bound this error with the area of the rectangle having as base the vectorization's resolution $\Delta=f_{i+1} - f_i$, and as height the difference between the EC at the two sampling points $|EC(f_i) - EC(f_{i+1})$. Note that this bound also holds in the more general case where the EC varies monotonically at multiple values inside the sampling interval. By summing up all the contributions we obtain the value $\Delta \cdot F = \Delta \cdot \sum_{i=0}^{n-2}|EC(i \Delta) - EC((i+1)\Delta)|$.

Type II errors, see Figure \ref{fig:ECC_vect_proof}B, occur when the EC has the same value at consecutive filtration steps but varies in between. The maximum possible variation can be upper bounded by the area of the rectangle with $\Delta$ as base and the half the number of cells in the complex as height. Each cell contributes to the EC by $\pm 1$, the factor one half is due to the constrain that the EC has the same value in $f_i$ and $f_{i+1}$. This amounts to the values $\Delta \cdot |K|/2$. 

By summing up the two contributions we obtain the bound in \ref{eq:ECC_vect}. Note that a generic situation can always be described as a combination of type I and type II errors.

\end{proof}

\begin{figure}[!ht]
    \centering
    \begin{subfigure}[t]{0.25\textwidth}
    \centering
        \scalebox{0.5}{\definecolor{ttzzqq}{rgb}{0.2,0.6,0}

\begin{tikzpicture}[line cap=round,line join=round,>=triangle 45,x=1cm,y=1cm]

\draw [line width=2pt] (0,0)-- (0,4);
\draw [line width=2pt] (4,0)-- (4,4);
\draw [line width=2pt] (0,3)-- (1,3);
\draw [line width=2pt] (1,1)-- (1,3);
\draw [line width=2pt] (1,1)-- (4,1);
\draw [line width=2pt,color=ttzzqq] (0,3.0368351897754455)-- (4,3.0368351897754455);

\draw[color=black] (0,-0.4) node {$f_i$};
\draw[color=black] (4,-0.4) node {$f_{i+1}$};
\draw [fill=ttzzqq] (0,3.0368351897754455) circle (5pt);
\draw [color=ttzzqq] (4,3.0368351897754455) circle (5pt);

\end{tikzpicture}}
    \caption{Type I}
    \end{subfigure}
    \begin{subfigure}[t]{0.25\textwidth}
    \centering
        \scalebox{0.5}{\definecolor{ttzzqq}{rgb}{0.2,0.6,0}

\begin{tikzpicture}[line cap=round,line join=round,>=triangle 45,x=1cm,y=1cm]

\draw [line width=2pt] (0,0)-- (0,4);
\draw [line width=2pt] (4,0)-- (4,4);
\draw [line width=2pt] (0,1.5)-- (1.5,1.5);
\draw [line width=2pt] (1.5,2.5)-- (1.5,1.5);
\draw [line width=2pt] (1.5,2.5)-- (2.5,2.5);
\draw [line width=2pt] (2.5,2.5)-- (2.5,1.5);
\draw [line width=2pt] (2.5,1.5)-- (4,1.5);
\draw [line width=2pt] (0,1.5)-- (1.5,1.5);
\draw [line width=2pt] (1.5,2.5)-- (2.5,2.5);
\draw [line width=2pt] (2.5,1.5)-- (4,1.5);
\draw [line width=2pt,color=ttzzqq] (0,1.5215607688849118)-- (4,1.5161281839226048);

\draw[color=black] (0,-0.4) node {$f_i$};
\draw[color=black] (4,-0.4) node {$f_{i+1}$};
\draw [fill=ttzzqq] (0,1.5215607688849118) circle (5pt);
\draw [color=ttzzqq] (4,1.5161281839226048) circle (5pt);

\end{tikzpicture}}
    \caption{Type II}
    \end{subfigure}
    \caption{The two possible source of errors during vectorization of an ECC}
    \label{fig:ECC_vect_proof}
\end{figure}
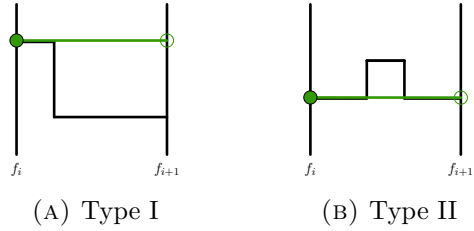

We have shown a way to bound the distance between an ECC and its vectorized version. Another possible stability question is whether this vectorization preserves distances between ECCs. In other words, we are interest in knowing whether something can be said for $||vec(ECC_1, N) - vec(ECC_2, N)||$ given $||ECC_1 - ECC_2||$  Unfortunately it is possible to construct examples in which two curves can be made arbitrary far apart but they have the same vectorization or two curves can be made arbitrary close but they have drastically different vectorizations. Figure \ref{fig:ECC_vs_vect_cases} shows two of such examples. Moreover, in the existing literature Johnson and Jung \cite{johnson_instability_2021} prove that the distance between two vectorized Betti curves can not be bounded by the Wasserstein distance between the respective persistence diagrams. They propose a stable vectorization inspired by Gaussian smoothing techniques. 

\begin{figure}[!ht]
    \centering
    \begin{subfigure}[t]{0.45\textwidth}
    \centering
        \scalebox{0.5}{\definecolor{ccqqqq}{rgb}{0.8,0,0}
\definecolor{ttzzqq}{rgb}{0.2,0.6,0}

\begin{tikzpicture}[line cap=round,line join=round,>=triangle 45,x=1cm,y=1cm]

\draw [->,line width=2pt] (0,0) -- (0,6);
\draw [->,line width=2pt] (0,0) -- (9,0);
\draw [line width=2pt,dash pattern=on 4pt off 4pt,color=ttzzqq] (2,0)-- (2,6);
\draw [line width=2pt,dash pattern=on 4pt off 4pt,color=ttzzqq] (4,0)-- (4,6);
\draw [line width=2pt,dash pattern=on 4pt off 4pt,color=ttzzqq] (6,0)-- (6,6);
\draw [line width=2pt,dash pattern=on 4pt off 4pt,color=ttzzqq] (8,0)-- (8,6);
\draw [line width=2pt] (0,5)-- (1.5,5);
\draw [line width=2pt] (1.5,3.5)-- (1.5,5);
\draw [line width=2pt] (1.5,3.5)-- (2.5,3.5);
\draw [line width=2pt] (2.5,2)-- (2.5,3.5);
\draw [line width=2pt] (2.5,2)-- (5.5,2);
\draw [line width=2pt] (5.5,1)-- (5.5,2);
\draw [line width=2pt] (5.5,1)-- (9,1);
\draw [line width=2pt,color=ccqqqq] (2.5,3.5)-- (2.5,5.5);
\draw [line width=2pt,color=ccqqqq] (2.5,5.5)-- (3.5,5.5);
\draw [line width=2pt,color=ccqqqq] (3.5,5.5)-- (3.5,2);
\end{tikzpicture}}
    \caption{}
    \end{subfigure}
    \begin{subfigure}[t]{0.45\textwidth}
    \centering
        \scalebox{0.5}{\definecolor{ccqqqq}{rgb}{0.8,0,0}
\definecolor{ttzzqq}{rgb}{0.2,0.6,0}

\begin{tikzpicture}[line cap=round,line join=round,>=triangle 45,x=1cm,y=1cm]
\draw [->,line width=2pt] (0,0) -- (0,6);
\draw [->,line width=2pt] (0,0) -- (9,0);
\draw [line width=2pt,dash pattern=on 4pt off 4pt,color=ttzzqq] (2,0)-- (2,6);
\draw [line width=2pt,dash pattern=on 4pt off 4pt,color=ttzzqq] (4,0)-- (4,6);
\draw [line width=2pt,dash pattern=on 4pt off 4pt,color=ttzzqq] (6,0)-- (6,6);
\draw [line width=2pt,dash pattern=on 4pt off 4pt,color=ttzzqq] (8,0)-- (8,6);
\draw [line width=2pt] (0,5)-- (3.5,5);
\draw [line width=2pt] (3.5,5)-- (3.5,1);
\draw [line width=2pt] (3.5,1)-- (9,1);
\draw [line width=2pt,color=ccqqqq] (3.5,5)-- (4.5,5);
\draw [line width=2pt,color=ccqqqq] (4.5,5)-- (4.5,1);
\draw [line width=2pt] (0,5)-- (3.5,5);
\end{tikzpicture}}
    \caption{}
    \end{subfigure}
    \caption{Two ECC superimposed in the same plot. In panel (A) the two curves can be made arbitrary far apart in $L_1$ but they have the same vectorization. In panel (B) the two curves can be made arbitrary close but they have drastically different vectorizations.}
    \label{fig:ECC_vs_vect_cases}
\end{figure}
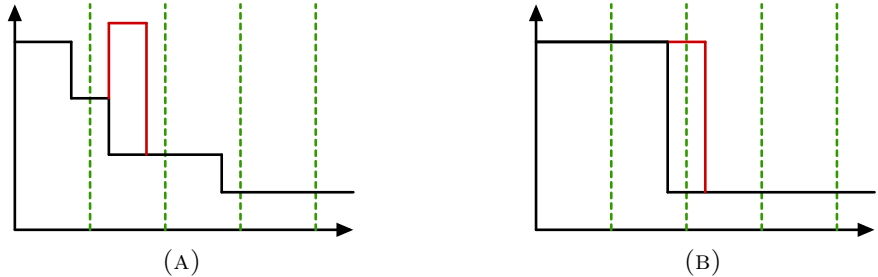

\subsection{Profiles}
An $n$-dimensional Euler Characteristic Profile whose filtration values ranges from $0$ to $f^i_{max}$ for $i\in 1 \cdots n$ can be vectorized in a similar fashion by sampling it on a grid of size $N_1 \times N_2 \times \cdots \times N_n$. In general the $N_i$ can be different and thus leading to different resolutions $\Delta_i$ on the various filtration parameters. The output of this sampling procedure is a $n$-dimensional tensor $vec(ECP, {N_i})$ that can be eventually flattened to a $1$-dimensional vector. 
Although this is an intuitive generalization of the $1$-dimensional ECC case, the procedure has an increased computational cost due to the difficulties in sampling EC values from a profile, as already discussed in Section \ref{sec:retrieving_EC}. Moreover, the stability result in \ref{prop:ECC_vec} can not be generalized to the multiparameter setting. As depicted in Figure \ref{fig:ECP_vectorized}, the grid vectorization could be not able to detect the contributions coming from pairs of cells. In the multiparameter case however, it is not possible to bound this contributions using only the vectorization resolutions $\Delta$ as such contribution can persist on subsequent grid elements up to infinity. 

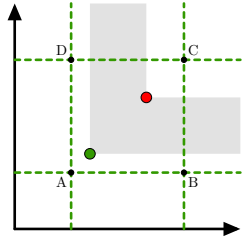
\begin{figure}[!ht]
    \centering
        \scalebox{0.5}{\definecolor{aqaqaq}{rgb}{0.6274509803921569,0.6274509803921569,0.6274509803921569}
\definecolor{ffqqqq}{rgb}{1,0,0}
\definecolor{ttzzqq}{rgb}{0.2,0.6,0}

\begin{tikzpicture}[line cap=round,line join=round,>=triangle 45,x=1cm,y=1cm]

\fill[line width=2pt,color=aqaqaq,fill=aqaqaq,fill opacity=0.3] (2,6) -- (2,2) -- (6,2) -- (6,3.5) -- (3.5,3.5) -- (3.5,6) -- cycle;
\draw [->,line width=2pt] (0,0) -- (0,6);
\draw [->,line width=2pt] (0,0) -- (6,0);
\draw [line width=2pt,dash pattern=on 4pt off 4pt,color=ttzzqq] (1.5,0)-- (1.5,6);
\draw [line width=2pt,dash pattern=on 4pt off 4pt,color=ttzzqq] (4.5,0)-- (4.5,6);
\draw [line width=2pt,dash pattern=on 4pt off 4pt,color=ttzzqq] (0,4.5)-- (6,4.5);
\draw [line width=2pt,dash pattern=on 4pt off 4pt,color=ttzzqq] (0,1.5)-- (6,1.5);

\draw [fill=ttzzqq] (2,2) circle (4pt);
\draw [fill=ffqqqq] (3.5,3.5) circle (4pt);
\draw [fill=black] (1.5,4.5) circle (2pt);
\draw[color=black] (1.25,4.75) node {D};
\draw [fill=black] (1.5,1.5) circle (2pt);
\draw[color=black] (1.25,1.25) node {A};
\draw [fill=black] (4.5,1.5) circle (2pt);
\draw[color=black] (4.75,1.25) node {B};
\draw [fill=black] (4.5,4.5) circle (2pt);
\draw[color=black] (4.75,4.75) node {C};

\end{tikzpicture}}
    \caption{A $2$-dimensional analog of a type II error of Figure \ref{fig:ECC_vect_proof}. The ECP is vectorized by sampling the EC values on the green grid. We can add pair of cells with contributions $\pm 1$ inside the rectangle ABCD in such a way that the value of the EC on the vertices does not change. However, such contributions have a non-zero sum on an area that can be made arbitrary large.}
    \label{fig:ECP_vectorized}
\end{figure}

\section{Examples and Experiments}

\subsection{RGB images}
\label{RGB}
A toy experiment using 3 dimensional Euler Characteristic Profiles can be constructed using RGB images. In a RGB image each pixel contains a tuple of 3 integers, each ranging from $0$ to $255$. They stand for the Red, Green and Blue color channel and all colors in the visible spectrum can be represented by a 3 tuple. In particular black is coded by (0,0,0) and white is (255, 255, 255). 

In this example we consider two different textures, stripes and checks, each of them can be red, green or blue. We generate 10 samples of each combination of style and color by adding random Gaussian noise to each pixel. We then compute the 3 dimensional Euler Characteristic Profile of the cubical complex obtained from each image and computed the matrix of pairwise $L_1$ distances between them. Such matrix is show in Figure \ref{fig:RGB_distance_matrix}. It confirms that distance between Euler Characteristic Profiles of different images increase following the intuitive sequence 'same style, same color' $<$ 'same style, different color' $<$ 'different style, same color' $<$ different style, different color'.

\begin{figure}[!ht]
    \centering
    \begin{subfigure}[c]{0.45\textwidth}
    \centering
        \includegraphics[width=0.8\textwidth]{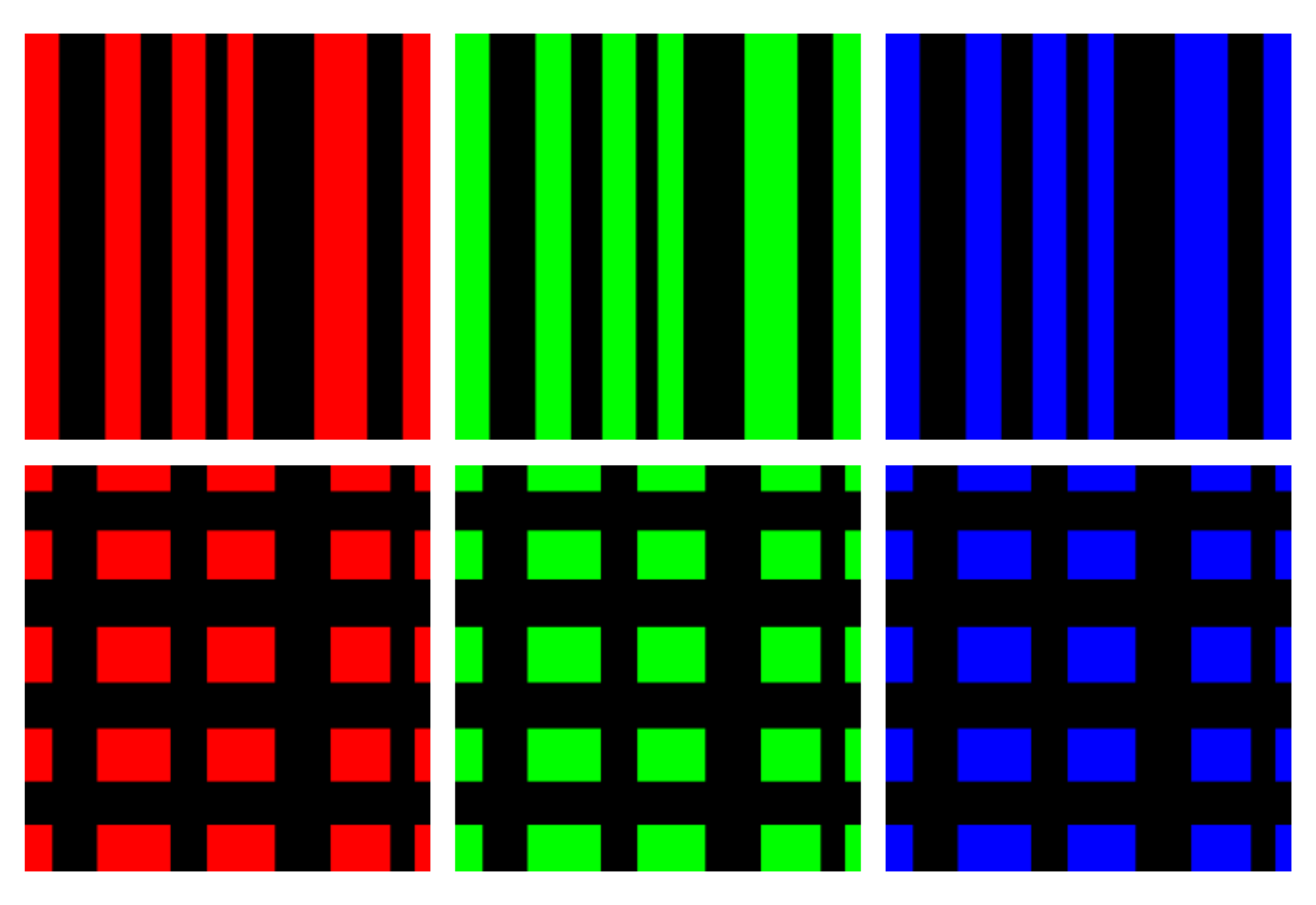}
    \end{subfigure}
    \begin{subfigure}[c]{0.45\textwidth}
    \centering
        \includegraphics[width=\textwidth]{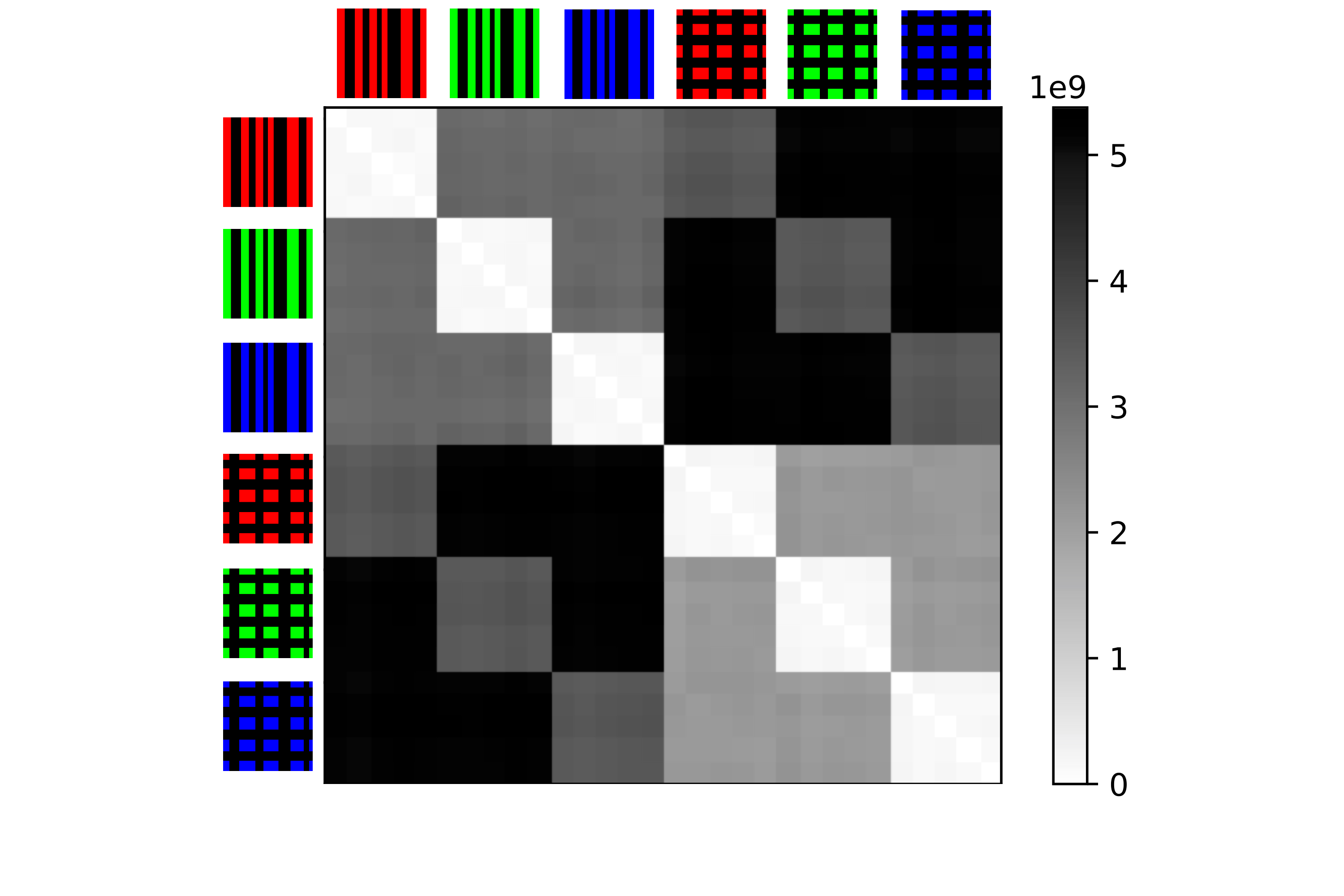}
    \end{subfigure}
    \caption{$60 \times 60$ distance matrix between Euler Characteristic Profiles of different RGB images.}
    \label{fig:RGB_distance_matrix}
\end{figure}

\subsection{Immune cell spatial patterns in tumors}
Vipond et al. \cite{vipond_multiparameter_2021} applied multiparameter persistent homology (MPH) landscapes to study immune cell location in digital histology images from head and neck cancer. They extracted the locations of three immune  cell types from histology slides thus obtaining a list of pointclouds labelled CD8+, FoxP3+, or CD68. The goal is to correctly classify a pointcloud. All pointcloud data are available at \href{https://github.com/MultiparameterTDAHistology/SpatialPatterningOfImmuneCells}{\texttt{github.com/MultiparameterTDAHistology/SpatialPatterningOfImmuneCells}}.
The Authors created a bifiltered Vietoris-Rips complex from each pointcloud, using radius and a codensity function defined over each vertex $p$ as  $\rho_{10}(p) = \frac{1}{10}\sum_{i=1}^{10} ||p - p_i||_2$ where $p_i$ is the $i$-th nearest neighbor of $p$. They then computed MPH-landscapes and used them as input for one of three classifiers: Linear Discriminant Analysis (LDA), Regularised Linear Discriminant Analysis (rLDA), and regularised Quadratic Discriminant Analysis (rQDA) \cite{hastie_elements_2009}. They made a randomized 80/20 training/test split, and evaluate the classification accuracy of 3 classifiers on the test data for each pair of cell types and for the three-class problem. The classification results are reported in the supplementary material of \cite{vipond_multiparameter_2021}.

We used the Authors' code to re-generate the same standard Vietoris-Rips and bifiltered Vietoris-Rips complexes from the provided pointclouds. We then computed ECC (radius only) and ECP (radius and codensity) for each complex and used them as input for the same LDA, rLDA and rQDA classifiers using the same train-test split procedure. The average accuracy for the various classification task  are reported in Tables \ref{table:accuracy_LDA}, \ref{table:accuracy_rLDA} and \ref{table:accuracy_rQDA}. Both ECC and ECP significatively outperforms MPH-landscapes while there is apparently no gain in moving from ECC to ECP. This can be an indication that the second dimension in the filtration (the codensity parameter) does not contain significant information.

\begin{table}[!ht]
    \begin{adjustwidth}{-1cm}{-1cm}
        \centering
        \begin{tabular}{|c|c|c|c|c|}
        \hline
            & CD68+ vs FoxP3+ & CD8+ vs FoxP3+ & CD8+ vs CD68+ & CD8+ vs CD68+ vs FoxP3+ \\ \hline
            & MPL - ECC - ECP & MPL - ECC - ECP  & MPL - ECC - ECP  & MPL - ECC - ECP \\ \hline
        T\_A & 0.584 - 0.938 - \textbf{0.941} & 0.672 - \textbf{0.994} - 0.988 & 0.669 - \textbf{0.894} - 0.856 & 0.486 - \textbf{0.896} - 0.886 \\ \hline
        T\_B & 0.794 - 0.917 - \textbf{0.922} & 0.88 - \textbf{0.992} - \textbf{0.992} & 0.54 - 0.943 - \textbf{0.962} & 0.568 - 0.921 - \textbf{0.940} \\ \hline
        T\_C & 0.723 - \textbf{0.947} - 0.904 & 0.7 - \textbf{0.884} - 0.859 & 0.605 - \textbf{0.811} - 0.699 & 0.505 - \textbf{0.842} - 0.755 \\ \hline
        T\_D & 0.811 - \textbf{0.960} - 0.933 & 0.899 - \textbf{0.986} - 0.985 & 0.644 - 0.802 - \textbf{0.807} & 0.613 - 0.862 - \textbf{0.874} \\ \hline
        T\_E & 0.732 - \textbf{0.941} - 0.940 & 0.644 - 0.867 - \textbf{0.869} & 0.593 - \textbf{0.806} - 0.688 & 0.511 - \textbf{0.842} - 0.719 \\ \hline
        T\_F & 0.738 - 0.655 - \textbf{0.933} & 0.644 - 0.619 - \textbf{0.830} & 0.73 - 0.709 - \textbf{0.850} & 0.511 - 0.578 - \textbf{0.824} \\ \hline
        T\_G & 0.771 - 0.788 - \textbf{0.858} & 0.782 - 0.791 - \textbf{0.904} & \textbf{0.675} - 0.614 - 0.609 & 0.599 - \textbf{0.673} - 0.659 \\ \hline
        T\_H & 0.710 - 0.651 - \textbf{0.885} & 0.682 - 0.747 - \textbf{0.955} & 0.628 - 0.695 - \textbf{0.891} & 0.555 - 0.659 - \textbf{0.845} \\ \hline
        T\_I & 0.733 - \textbf{0.788} - 0.737 & \textbf{0.758} - 0.716 - 0.679 & 0.540 - 0.693 - \textbf{0.713} & 0.548 - \textbf{0.716} - 0.493 \\ \hline
        T\_J & 0.727 - 0.642 - \textbf{0.767} & 0.535 - 0.678 - \textbf{0.857} & 0.602 - 0.808 - \textbf{0.868} & 0.449 - 0.507 - \textbf{0.699} \\ \hline
        T\_K & 0.510 - \textbf{0.872} - 0.770 & 0.570 - 0.784 - \textbf{0.816} & 0.502 - 0.823 - \textbf{0.877} & 0.404 - 0.594 - \textbf{0.635} \\ \hline
        T\_N & 0.493 - 0.457 - \textbf{0.570} & 0.512 - \textbf{0.658} - 0.632 & 0.577 - 0.507 - \textbf{0.760} & 0.342 - \textbf{0.462} - 0.370 \\ \hline
        T\_O & \textbf{0.948} - 0.830 - 0.840 & \textbf{0.788} - 0.602 - 0.754 & 0.532 - 0.484 - \textbf{0.598} & 0.550 - 0.431 - \textbf{0.615} \\ \hline
    \end{tabular}
\end{adjustwidth}
\caption{Average classification accuracy for the LDA classifier using as input MLP, ECC or ECP. Data for each tumor are split into 80/20 train-test splits and classification accuracy is reported as the mean over 100 repetitions of splitting, training and testing. }
\label{table:accuracy_LDA}
\end{table}

\begin{table}[!ht]
    \begin{adjustwidth}{-1cm}{-1cm}
        \centering
        \begin{tabular}{|c|c|c|c|c|}
        \hline
            & CD68+ vs FoxP3+ & CD8+ vs FoxP3+ & CD8+ vs CD68+ & CD8+ vs CD68+ vs FoxP3+ \\ \hline
            & MPL - ECC - ECP & MPL - ECC - ECP  & MPL - ECC - ECP  & MPL - ECC - ECP \\ \hline
        T\_A & 0.491 - \textbf{0.967} - 0.964 & 0.642 - \textbf{0.973} - 0.967 & 0.630 - \textbf{0.840} - 0.830 & 0.427 - 0.858 - \textbf{0.859} \\ \hline
        T\_B & 0.760 - \textbf{0.892} - 0.869 & 0.787 - \textbf{0.986} - 0.985 & 0.671 - 0.942 - \textbf{0.945} & 0.604 - \textbf{0.868} - 0.865 \\ \hline
        T\_C & 0.863 - \textbf{0.906} - 0.896 & 0.747 - \textbf{0.847} - 0.842 & \textbf{0.653} - 0.584 - 0.614 & \textbf{0.640} - 0.628 - 0.627 \\ \hline
        T\_D & 0.683 - \textbf{0.926} - 0.918 & 0.829 - \textbf{0.990} - 0.988 & 0.476 - \textbf{0.779} - \textbf{0.779} & 0.492 - \textbf{0.779} - 0.775 \\ \hline
        T\_E & 0.820 - \textbf{0.886} - 0.883 & 0.736 - \textbf{0.929} - 0.920 & 0.534 - 0.735 - \textbf{0.743} & 0.502 - \textbf{0.702} - 0.683 \\ \hline
        T\_F & 0.623 - 0.899 - \textbf{0.925} & 0.476 - 0.842 - \textbf{0.847} & 0.765 - 0.909 - \textbf{0.921} & 0.408 - 0.845 - \textbf{0.847} \\ \hline
        T\_G & 0.886 - \textbf{0.932} - 0.927 & 0.897 - 0.970 - \textbf{0.975} & 0.446 - \textbf{0.696} - 0.692 & 0.581 - 0.738 - \textbf{0.746} \\ \hline
        T\_H & 0.524 - 0.890 - \textbf{0.898} & 0.735 - \textbf{0.930} - 0.929 & 0.714 - \textbf{0.882} - 0.877 & 0.502 - 0.844 - \textbf{0.859} \\ \hline
        T\_I & 0.859 - 0.912 - \textbf{0.931} & 0.883 - 0.908 - \textbf{0.909} & 0.484 - 0.470 - \textbf{0.474} & 0.597 - \textbf{0.619} - 0.614 \\ \hline
        T\_J & 0.608 - \textbf{0.763} - 0.750 & 0.750 - 0.835 - \textbf{0.872} & 0.850 - 0.882 - \textbf{0.892} & 0.536 - 0.653 - \textbf{0.670} \\ \hline
        T\_K & 0.376 - \textbf{0.868} - 0.804 & 0.523 - \textbf{0.918} - 0.914 & 0.455 - \textbf{0.857} - 0.845 & 0.261 - \textbf{0.718} - 0.679 \\ \hline
        T\_N & 0.410 - 0.527 - \textbf{0.563} & 0.432 - 0.662 - \textbf{0.745} & 0.643 - 0.690 - \textbf{0.713} & 0.294 - 0.388 - \textbf{0.460} \\ \hline
        T\_O & 0.702 - \textbf{0.954} - 0.952 & 0.644 - \textbf{0.806} - 0.772 & 0.546 - 0.672 - \textbf{0.684} & 0.429 - \textbf{0.639} - 0.632 \\ \hline
    \end{tabular}
\end{adjustwidth}
\caption{Average classification accuracy for the rLDA classifier using as input MLP, ECC or ECP. Data for each tumor are split into 80/20 train-test splits and classification accuracy is reported as the mean over 100 repetitions of splitting, training and testing. }
\label{table:accuracy_rLDA}
\end{table}

\begin{table}[!ht]
    \begin{adjustwidth}{-1cm}{-1cm}
    \centering
    \begin{tabular}{|c|c|c|c|c|}
        \hline
            & CD68+ vs FoxP3+ & CD8+ vs FoxP3+ & CD8+ vs CD68+ & CD8+ vs CD68+ vs FoxP3+ \\ \hline
            & MPL - ECC - ECP & MPL - ECC - ECP  & MPL - ECC - ECP  & MPL - ECC - ECP \\ \hline
            T\_A & 0.503 - \textbf{0.945} - 0.931 & 0.598 - \textbf{0.840} - 0.838 & 0.598 - \textbf{0.840} - 0.838 & 0.380 - \textbf{0.865} - 0.861 \\ \hline
            T\_B & 0.738 - \textbf{0.896} - 0.867 & 0.588 - \textbf{0.913} - 0.911 & 0.588 - \textbf{0.913} - 0.911 & 0.531 - \textbf{0.871} - 0.869 \\ \hline
            T\_C & 0.855 - \textbf{0.915} - 0.906 & 0.673 - 0.552 - \textbf{0.568} & 0.673 - 0.552 - \textbf{0.568} & 0.614 - 0.627 - \textbf{0.640} \\ \hline
            T\_D & 0.554 - \textbf{0.934} - 0.929 & 0.494 - \textbf{0.767} - 0.755 & 0.494 - \textbf{0.767} - 0.755 & 0.482 - 0.786 - \textbf{0.787} \\ \hline
            T\_E & 0.826 - \textbf{0.876} - 0.871 & 0.548 - 0.751 - \textbf{0.754} & 0.548 - 0.751 - \textbf{0.754} & 0.499 - \textbf{0.754} - 0.724 \\ \hline
            T\_F & 0.646 - \textbf{0.964} - 0.963 & 0.666 - 0.853 - \textbf{0.855} & 0.666 - 0.853 - \textbf{0.855} & 0.412 - \textbf{0.881} - 0.878 \\ \hline
            T\_G & 0.882 - \textbf{0.937} - 0.928 & 0.485 - \textbf{0.723} - 0.699 & 0.485 - \textbf{0.723} - 0.699 & 0.583 - \textbf{0.771} - 0.767 \\ \hline
            T\_H & 0.621 - 0.968 - \textbf{0.967} & 0.699 - 0.886 - \textbf{0.898} & 0.699 - 0.886 - \textbf{0.898} & 0.550 - 0.889 - \textbf{0.901} \\ \hline
            T\_I & 0.919 - 0.928 - \textbf{0.940} & 0.493 - \textbf{0.531} - 0.527 & 0.493 - \textbf{0.531} - 0.527 & \textbf{0.626} - 0.621 - 0.624 \\ \hline
            T\_J & 0.588 - \textbf{0.908} - 0.903 & 0.860 - 0.898 - \textbf{0.902} & 0.860 - 0.898 - \textbf{0.902} & 0.541 - \textbf{0.720} - 0.719 \\ \hline
            T\_K & 0.468 - \textbf{0.874} - 0.838 & 0.567 - \textbf{0.923} - 0.903 & 0.567 - \textbf{0.923} - 0.903 & 0.352 - \textbf{0.751} - 0.736 \\ \hline
            T\_N & 0.353 - 0.453 - \textbf{0.477} & 0.510 - \textbf{0.617} - 0.600 & 0.510 - \textbf{0.617} - 0.600 & 0.334 - \textbf{0.392} - 0.384 \\ \hline
            T\_O & 0.724 - 0.972 - \textbf{0.984} & 0.524 - \textbf{0.668} - 0.662 & 0.524 - \textbf{0.668} - 0.662 & 0.440 - 0.730 - \textbf{0.738} \\ \hline
    \end{tabular}
    \end{adjustwidth}
    \caption{Average classification accuracy for the rQDA classifier using as input MLP, ECC or ECP. Data for each tumor are split into 80/20 train-test splits and classification accuracy is reported as the mean over 100 repetitions of splitting, training and testing. }
\label{table:accuracy_rQDA}
\end{table}

\subsection{Prostate cancer histology slides}
Lawson et al. \cite{lawson_persistent_2019} demonstrated that Persistent Homology can successfully be used to evaluate features in prostate cancer hematoxylin and eosin (H\&E) stained slides. Their dataset, available in the Open Science Framework \url{https://osf.io/k96qw/} contains $5182$ RGB images of a resolution $512 \times 512$ corresponding to different regions of interest (ROIs) in prostate cancer H\&E slices obtained from 39 patients. Each image is labelled with a Gleason score of 3, 4 or 5 indicating the architectural patterns of the cancer. An higher Gleason score indicates an increasing level of cancer aggressiveness. The datasets contains $2567$ grade 3 ROIs, $2351$ grade 4 ROIs but only $264$ grade 5 ROIs. Given the unbalance in the data we decided to consider a classification problem between grade 3 and 4.

Following the procedure described by the Authors we normalized and extracted the hematoxylin and eosin color channel from each ROI. By doing so we converted each RGB image into a bidimensional (H, E) one. We first computed the ECC for each of the grayscale images corresponding to the hematoxylin channel as it is the color that highlights cell nuclei. We then also used the eosin color channel to obtain a $2$-dimensional ECP. We input either the ECCs or the ECPs into an Support Vector Machine (SVM) \cite{bishop2006pattern} classifier and computed the mean test accuracy over 100 rounds with a 80/20 training split. The results are displayed in Table \ref{tab:HE_acc}. The classifier using as input the $2$-dimensional ECPs is consistently performing better than the one using the $1$-dimensional ECCs.

\begin{table}[!ht]
    \centering
    \begin{tabular}{c|c}
    hematoxylin ECC & hematoxylin \& eosin ECP \\
    \hline
      $0.765 \pm 0.001$       & $0.826 \pm 0.001$
    \end{tabular}
    \caption{Mean test accuracy for the Gleason 3 vs Gleason 4 classification using ECCs or ECPs as input to an SVM classifier.}
    \label{tab:HE_acc}
\end{table}

\begin{figure}[!ht]
\begin{subfigure}[t]{0.3\textwidth}
    \centering
    \includegraphics[width=0.7\textwidth]{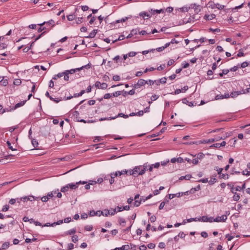}
\subcaption{}
\end{subfigure}
\hfill
\begin{subfigure}[t]{0.3\textwidth}
    \centering
    \includegraphics[width=0.7\textwidth]{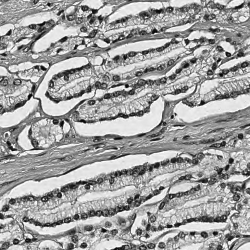}
\subcaption{}
\end{subfigure}
\hfill
\begin{subfigure}[t]{0.3\textwidth}
    \centering
    \includegraphics[width=0.7\textwidth]{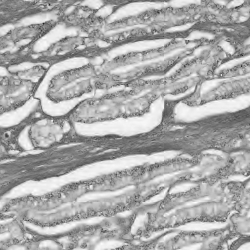}
\subcaption{}
\end{subfigure}
\caption{Panel (A) depicts a raw RGB ROI. Panel (B) contains the hematoxylin channel while panel (C) contains the eosin one.}
\label{fig:prostate_he}
\end{figure}

\begin{figure}[!ht]
\centering
\begin{subfigure}[t]{0.3\textwidth}
    \centering
    \includegraphics[height=0.9\textwidth]{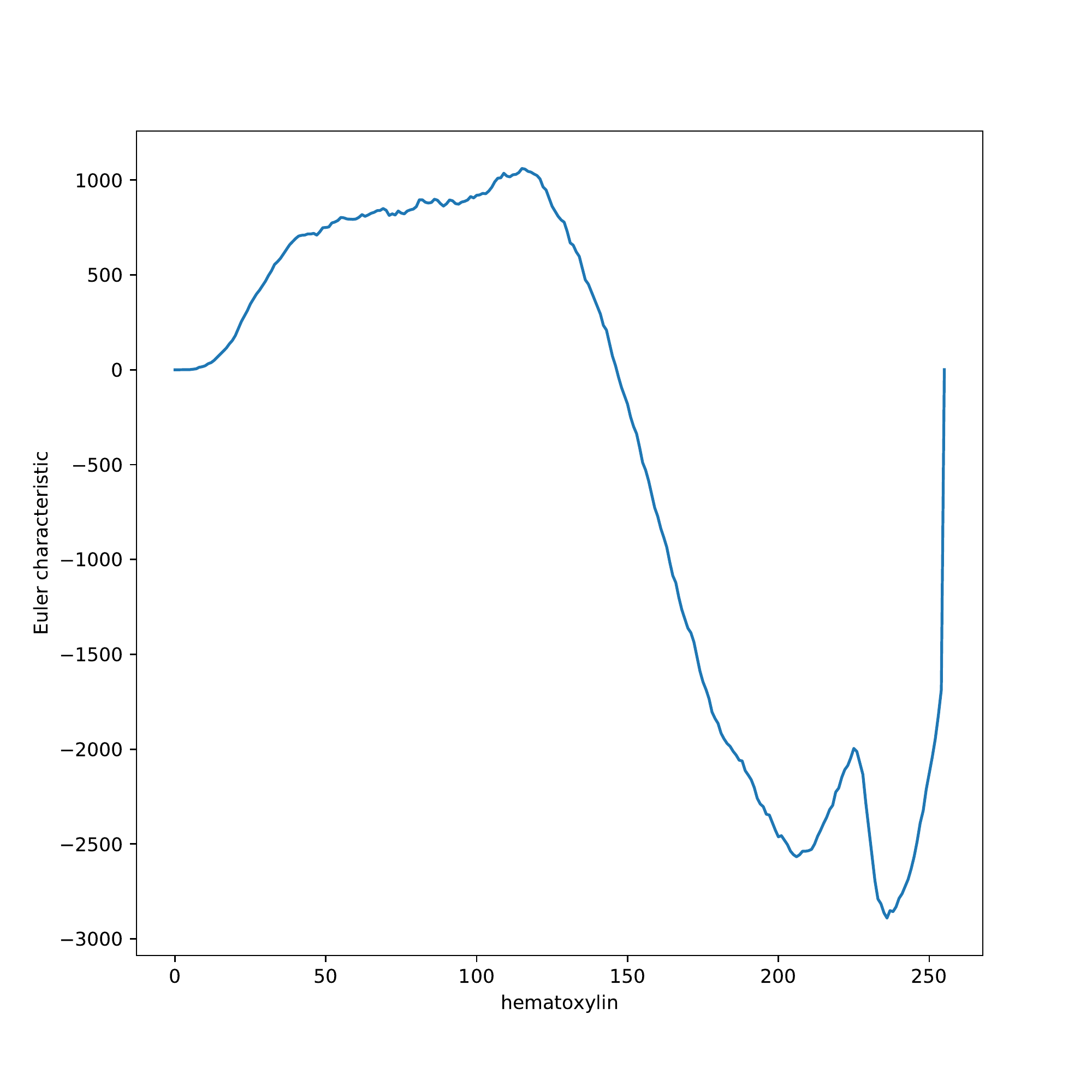}
\subcaption{}
\end{subfigure}
\hfill
\begin{subfigure}[t]{0.3\textwidth}
    \centering
    \includegraphics[height=0.9\textwidth]{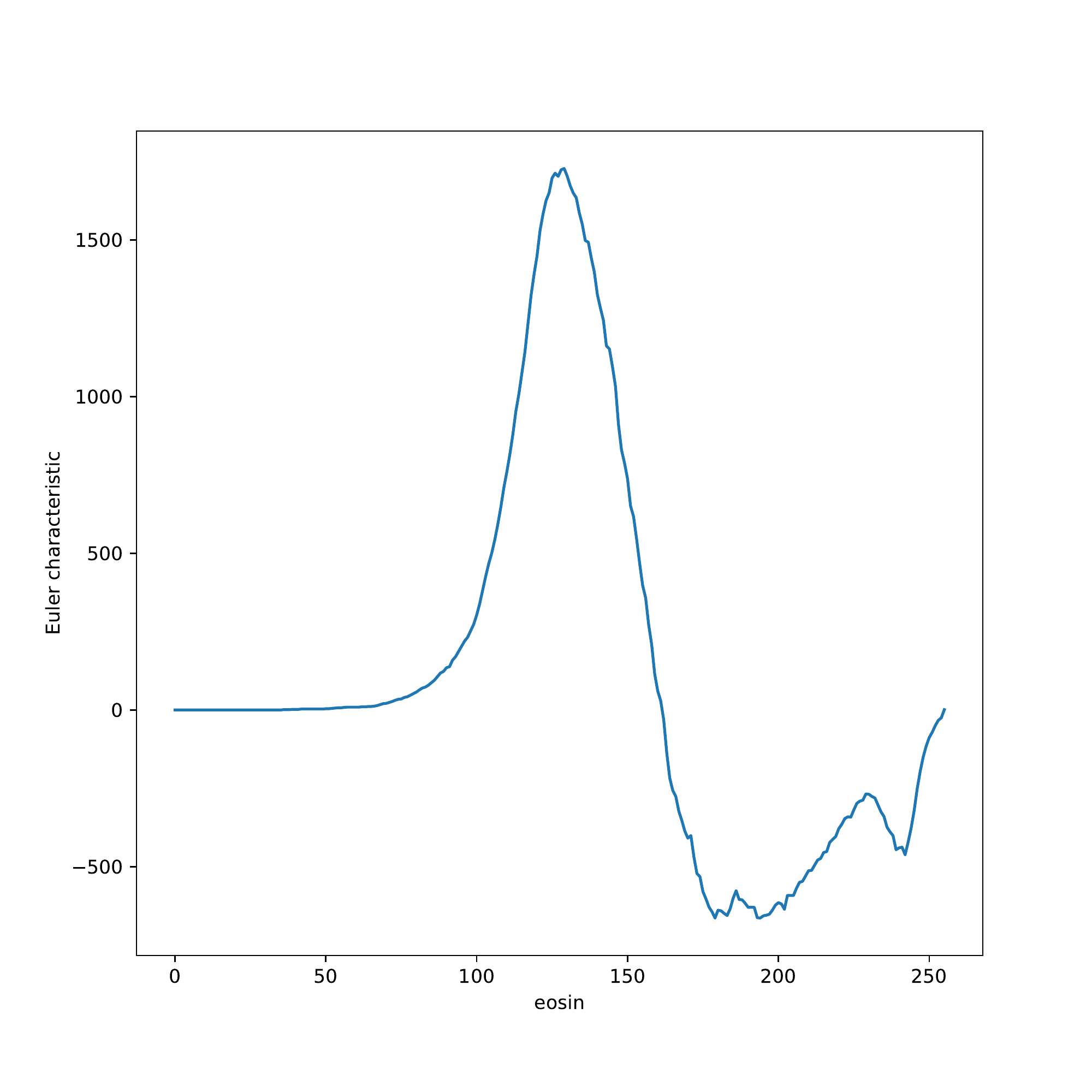}
\subcaption{}
\end{subfigure}
\hfill
\begin{subfigure}[t]{0.3\textwidth}
    \centering
    \includegraphics[height=0.9\textwidth]{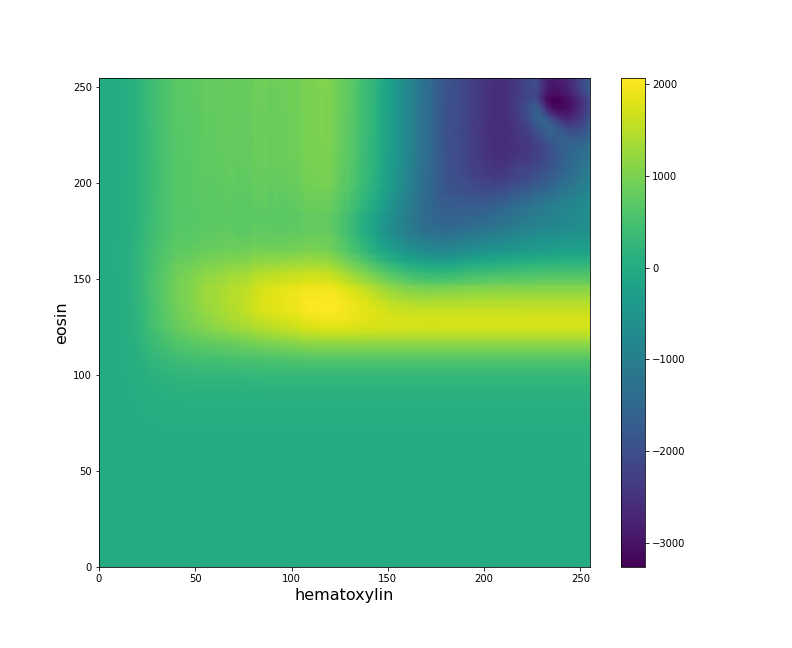}
\subcaption{}
\end{subfigure}
\caption{Panel (A) depicts the hematoxylin ECC for the ROI in Figure \ref{fig:prostate_he} while panel (B) depicts the eosin ECC. The combined ECP is showed in panel (C).}
\label{fig:prostate_ECC_ECP}
\end{figure}

\section{Conclusions}
Euler Characteristic Curves and Profiles provide a stable summary of the shape of data. Unlike other summaries used in Topological Data Analysis this one can be computed in a distributed fashion, hence is applicable to deal with big data problems. In addition we show, contrary to a common misconception, that the Euler Characteristic Curves and Profiles enjoys certain type of stability. We confirm it when using them to discriminate various toy datasets with varying level of noise. We also show how to compare and vectorize the Euler Characteristic Curves and Profiles and apply them to a number of real data analysis problems. The presented results are accompanied with efficient Python implementation. For example, on modern commodity hardware, our implementation for V-R complexes can handle a number of simplices on the order of $10^{10}$. This is two order of magnitude more that what can be achieved using available software like GUDHI \cite{gudhi:urm}.
With this work we hope that the machinery of Euler Characteristic Curves and Profiles will be useful for practitioners in Topological Data Analysis.

\section{Code availability}
Python implementations of Algorithms \ref{algo:local_contribution_VR} and \ref{algo:local_contribution_cube} are available at \url{https://github.com/dioscuri-tda/pyEulerCurves}. Jupyter notebooks to reproduce all the experiments described in this article are available at \url{https://github.com/dioscuri-tda/ecp_experiments}.

\newpage
\appendix
\section{Time performance analysis}
\label{apdx:time}
We asses the time performance of Algorithm \ref{algo:local_contribution_VR} by analyzing the worst-case scenario, a complete graph built from a pointcloud $\{x_i\}$ $i \in [1, n]$. This is the worst-case scenario as it contains the maximal number of cliques (hence simplices) for a given number of vertices, namely $2^n -1$.  As discussed in the previous section, the running time will be dominated by the first vertex $x_1$ as it has the highest number of successive neighbours. 
The most time consuming operations are the ones that happen inside Algorithm \ref{algo:increase_dim}, namely the \textsc{update filtration} and \textsc{update common neighbours} subroutines.

\subsection{Update filtration}
The extension of a $d$-clique requires checking whether one or more of the new introduced edges have a filtration value higher than the current $d$-clique. Comparison between floats can be done in constant time and has to be repeated $d$ times. With reference to figure \ref{fig:extend_filtration}, we can assign to each edge in the simplex tree a cost that depends only on the edge depth. The total sum of such cost is
\[
\sum_{i=1}^n (i-1) \binom{n}{i} = 2^{n - 1} n - 2^n + 1 \quad .
\]
In case of perfect parallelization, the cost for the first vertex only is
\[
\sum_{i=1}^{n - 1} i \binom{n - 1}{i} = 2^{n - 2} (n - 1) \quad .
\]

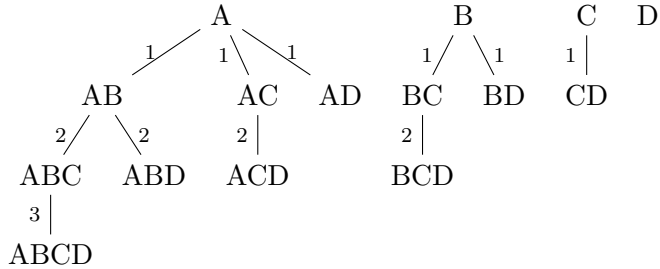
\begin{figure}[!ht]
\centering
      \begin{forest}
[
  [A, no edge
    [AB, edge label={node[midway,left,font=\scriptsize]{1}}
      [ABC, edge label={node[midway,left,font=\scriptsize]{2}}
        [ABCD, edge label={node[midway,left,font=\scriptsize]{3}}]
      ]
      [ABD, edge label={node[midway,right,font=\scriptsize]{2}}]
    ]
    [AC, edge label={node[midway,left,font=\scriptsize]{1}}
      [ACD, edge label={node[midway,left,font=\scriptsize]{2}}]
    ]
    [AD, edge label={node[midway,right,font=\scriptsize]{1}}]
  ]
  [B, no edge
    [BC, edge label={node[midway,left,font=\scriptsize]{1}}
      [BCD, edge label={node[midway,left,font=\scriptsize]{2}}]
    ]
    [BD, edge label={node[midway,right,font=\scriptsize]{1}}]
  ]
  [C, no edge
    [CD, edge label={node[midway,left,font=\scriptsize]{1}}]
  ]
  [D, no edge]
]
\end{forest} 
\caption{Simplex tree for a 4-clique with the update filtration cost.}
\label{fig:extend_filtration}
\end{figure}

\subsection{Update common neighbours}
Updating the list of common neighbours after a clique extension requires computing the intersection between the current list of common neighbours (with length $m_1$) and the list of neighbours of the newly added vertex (with length $m_2$). Given that such lists are ordered their intersection can be computed in $\mathcal{O}(m_1 + m_2)$.  The total cost for this operation can be obtain recursively by observing that the number of neighbours in a clique is uniquely determined - in this particular case - only by the last element of the clique. For example, in Figure \ref{fig:intersection} the subtree spanning from \textsc{AB} is the same as the one spanning from \textsc{B}, and the one from \textsc{AC} is equivalent to \textsc{C}.   The total cost for a clique of size $n$ can be then expressed as twice the cost for the $(n-1)-$clique plus the cost of the depth $1$ edges spanning from the first vertex:
\begin{align*}
c(n) &= \frac{(n-1)(n-2)}{2} + 2c(n-1) + (n-1)^2 \quad ;\\
c(2) &= 1 \quad .
\end{align*}
Such recurrence has solution 
\[
c(n) = \frac{1}{2} (-8 + 2^{n+3} - 5 n - 3 n^2) \quad .
\]
In case of perfect parallelization, the cost for the first vertex only is
\[
c(n) - c(n-1) = 2^{n+1} - 3 n - 1 \quad .
\]

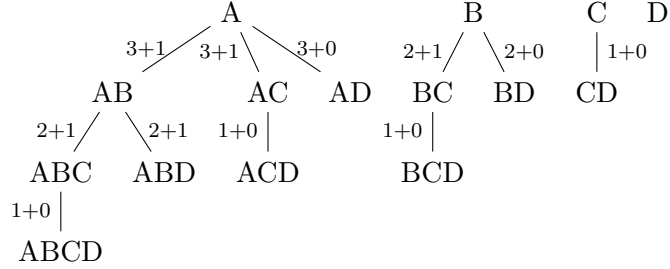
\begin{figure}[!ht]
\centering
      \begin{forest}
[
  [A, no edge
    [AB, edge label={node[midway,left,font=\scriptsize]{3+1}}
      [ABC, edge label={node[midway,left,font=\scriptsize]{2+1}}
        [ABCD, edge label={node[midway,left,font=\scriptsize]{1+0}}]
      ]
      [ABD, edge label={node[midway,right,font=\scriptsize]{2+1}}]
    ]
    [AC, edge label={node[midway,left,font=\scriptsize]{3+1}}
      [ACD, edge label={node[midway,left,font=\scriptsize]{1+0}}]
    ]
    [AD, edge label={node[midway,right,font=\scriptsize]{3+0}}]
  ]
  [B, no edge
    [BC, edge label={node[midway,left,font=\scriptsize]{2+1}}
      [BCD, edge label={node[midway,left,font=\scriptsize]{1+0}}]
    ]
    [BD, edge label={node[midway,right,font=\scriptsize]{2+0}}]
  ]
  [C, no edge
    [CD, edge label={node[midway,right,font=\scriptsize]{1+0}}]
  ]
  [D, no edge]
]
\end{forest} 
\caption{Simplex tree for a 4-clique with the update common neighbours cost.}
\label{fig:intersection}
\end{figure}

\section{Memory performance analysis}
\label{apdx:memory}
At each step, the algorithm needs to store in memory the local graph $G$ with each edge's filtration value. The local graph can be stored as an adjacency matrix whose entries represent the filtration values. Moreover, we need to store the current list of simplices, the list of their filtration values and the list of common neighbour for each simplex. Let us denote with $V$ the bits needed to store an edge label (usually a \textsc{uint}) and with $F$ the bits needed to store a filtration value (usually a \textsc{float}).
Assuming the worst case scenario of a fully connected graph with $n$ nodes, the maximum number of simplices will be generated at dimension $\frac{n}{2}$ and will be $\binom{n}{n/2}$. The memory cost at that step will then be $\mathcal{O}(n(n-1)F + \binom{n}{n/2}nV + \binom{n}{n/2}F) = O \left( \frac{2^n}{\sqrt{n}} (nV + F) \right)$, where the first term is the graph cost, the second one is the cost of the list of simplices and the list of common neighbours that we assume to have the same size due to the symmetry of the binomial coefficients, and the third one is the cost of the simplices filtration values.

\bibliographystyle{plain}
\bibliography{references}

\end{document}